\title{Finite  groups acting on $3$--manifolds and cyclic branched coverings of knots}
\author{Mattia Mecchia}
\address{Universit\`a degli Studi di Trieste\\\newline
Dipartimento di Matematica e Informatica\\
34100 Trieste\\Italy}
\email{mecchia@dmi.units.it}
\urladdr{}
\dedicatory{To the memory of Heiner Zieschang}
\let\Bbb\mathbb
\newtheorem{thm}{Theorem}
\newtheorem*{thmn}{Theorem}
\newtheorem{prop}{Proposition}
\def\O{{\cal O}}
\def\Z{\Bbb Z}
\def\A{\Bbb A}
\begin{document}

\begin{asciiabstract}
We are interested in finite groups acting orientation-preservingly on
3-manifolds (arbitrary actions, ie not necessarily free
actions). In particular we consider finite groups which contain an
involution with nonempty connected fixed point set.  This condition is
satisfied by the isometry group of any hyperbolic cyclic branched
covering of a strongly invertible knot as well as by the isometry
group of any hyperbolic 2-fold branched covering of a knot in the
3-sphere.  In the paper we give a characterization of nonsolvable
groups of this type. Then we consider some possible applications to
the study of cyclic branched coverings of knots and of hyperelliptic
diffeomorphisms of 3-manifolds. In particular we analyze the basic
case of two distinct knots with the same cyclic branched covering.
\end{asciiabstract}

\begin{htmlabstract}
We are interested in finite groups acting orientation-preservingly on
3&ndash;manifolds (arbitrary actions, ie not necessarily free
actions). In particular we consider finite groups which contain an
involution with nonempty connected fixed point set.  This condition is
satisfied by the isometry group of any hyperbolic cyclic branched
covering of a strongly invertible knot as well as by the isometry
group of any hyperbolic 2&ndash;fold branched covering of a knot in
S<sup>3</sup>.  In the paper we give a characterization of nonsolvable
groups of this type. Then we consider some possible applications to
the study of cyclic branched coverings of knots and of hyperelliptic
diffeomorphisms of 3&ndash;manifolds. In particular we analyze the
basic case of two distinct knots with the same cyclic branched
covering.
\end{htmlabstract}

\begin{abstract} 
We are interested in finite groups acting orientation-preservingly on $3$--manifolds (arbitrary actions, ie not necessarily free actions). In particular we  consider finite groups  which contain an involution with nonempty connected fixed point set.  This condition is satisfied by the isometry group of any hyperbolic cyclic branched covering of a strongly invertible knot  as well as by the isometry group of any hyperbolic $2$--fold branched covering of a knot  in $S^3$.   In the paper we give a characterization of nonsolvable groups of this type. Then we consider some possible applications to the study of cyclic branched coverings of knots and of hyperelliptic diffeomorphisms of $3$--manifolds. In particular we analyze the basic case of two distinct knots with the same cyclic branched covering.
\end{abstract}

\maketitle

\section{Introduction}

The following problem has been   diffusely studied in the literature:  which finite groups  admit an action on a  homology $3$--sphere.  The choice of the coefficients of the homology changes completely the situation.

If a finite  group $G$ acts freely on an integer homology $3$--sphere (and in particular on the standard $3$--sphere $S^3$), the group $G$ has periodic cohomology of period four. Milnor \cite{Mn} gave a list of groups  which are candidates for free actions on integer homology $3$--spheres. This list consists of  the finite subgroups of ${\rm SO}(4)$ and the Milnor groups $Q(8n,k,l)$.
The recent results of Perelman imply that  no group of type $Q(8n,k,l)$ acts on $S^3$ \cite{P1,P2}. On the contrary some Milnor groups admit an action on an integer homology $3$--sphere \cite{Mg}.

If we admit arbitrary actions, the list of candidates  is again comparable with the list of finite subgroups of  ${\rm SO}(4)$. For example Reni and Zimmermann (see Zimmermann \cite{Z} and Mecchia and Zimmermann \cite{MZ1}) characterized the nonsolvable groups acting on integer homology $3$--spheres; the unique simple group that admits an action on an  integer homology $3$--sphere is $\A_5$ (and it cannot act freely).  For the standard $3$--sphere,  Thurston's  orbifold geometrization theorem  \cite{BLPo} implies that the finite groups with nonfree actions are exactly the subgroups of  ${\rm SO}(4)$.

On the other hand, Cooper and Long \cite{CL} proved  that  every finite   group admits an  action on a rational homology $3$--sphere (and even a free action).

The  class of $\Bbb{Z}_2$--homology $3$--spheres is intermediate between these two cases. This class is interesting also because  $\Bbb{Z}_2$--homology $3$--spheres appear more frequently than integer homology $3$--spheres; for example  $2$--fold branched coverings of  knots in $S^3$ are $\Bbb{Z}_2$--homology $3$--spheres.   Dotzel and Hamrick \cite{DH} proved that every finite $2$--group acting on a  $\Bbb{Z}_2$--homology $3$--sphere acts orthogonally on $S^3$. This property is not true in general  for solvable groups (already for integer homology $3$--spheres).  In \cite{MZ1} a list of nonsolvable groups which are candidates for actions on $\Bbb{Z}_2$--homology $3$--spheres was given; in this case the only simple groups, that occur, are the projective  special linear groups  ${\rm PSL}(2,q)$. 

In the present paper we  consider finite  groups acting orientation-preservingly on $3$--manifolds  which contain an involution with nonempty connected fixed point set.
We recall that any involution acting on a $\Bbb{Z}_2$--homology $3$--sphere has connected fixed point set (maybe empty), so there are some relations with our situation.
For example the $2$--fold branched coverings of  knots   satisfy both assumptions but in general the two conditions  give different classes of $3$--manifolds.

In fact not all $\Bbb{Z}_2$--homology $3$--spheres admit the action of an involution with nonempty fixed point set. For example if $K$ is a hyperbolic knot in $S^3$  without symmetries, for coefficients sufficiently large, Dehn surgery along the knot gives  a hyperbolic manifold with trivial isometry group (by Thurston's hyperbolic surgery theorem \cite{T}); moreover for $p$ odd  a   $p/q$--surgery  gives a  $\Bbb{Z}_2$--homology $3$--sphere. 

On the other hand  all the $3$--manifolds that are  the $n$--fold cyclic branched covering of a strongly invertible knot admit the  action of an involution with nonempty and connected fixed point set;  it is easy to find examples of $n$--fold cyclic branched coverings of strongly invertible knots that have nontrivial  first  $\Bbb{Z}_2$--homology group (some computation of first  homology group can be found in \cite{Go}). The possibility to study the  $n$--fold cyclic branched coverings of  strongly invertible knots is one of the motivations of this paper.
Another  example of a $3$--manifold admitting  an involution with nonempty connected fixed point set can be obtained by a $3$--component link $L$ admitting a symmetry $t$ with nonempty fixed point set which acts as a reflection on one component while exchanging the remaining two (eg the Borromean rings); the $2$--fold branched covering $M$  of $L$ has nontrivial first $\Bbb{Z}_2$--homology  group (see Sakuma \cite[Sublemma 15.4]{Sa}) and the lift of $t$ is an involution with  the desired property.

When we consider   finite groups acting on $3$--manifolds, the two different  assumptions imply different analyses. 
In fact for   $\Bbb{Z}_2$--homology $3$--spheres we have  some global information about $2$--groups which admit an action. In our case we can control directly only the centralizer of the involution  with nonempty connected fixed point set, thus it is more  difficult to pass to a global description of the group, even in the case of $2$--groups.

A first step in this direction was obtained by Reni and Zimmermann.

\setcounter{thm}{-1}
\begin{thm}\label{thm0}{\rm \cite{RZ1}}\qua
Let $G$ be a finite group of orientation-preserving diffeomorphisms of a closed orientable $3$--manifold; if $G$ contains an involution with nonempty connected fixed point set, then $G$ has sectional $2$--rank at most four (ie every $2$--subgroup is generated by at most four elements).
\end{thm}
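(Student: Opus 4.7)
The plan is to first bound the rank of the centralizer $C_G(h)$ by geometric means, and then upgrade the result to a bound on arbitrary $2$-subgroups. After averaging a $G$-invariant Riemannian metric I may assume $G$ acts by isometries; as an orientation-preserving involution, $h$ has fixed set of even codimension, so the nonempty and connected $F := \mathrm{Fix}(h)$ is a single embedded circle.

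The local step is the following rank bound on the centralizer. Let $S$ be a Sylow $2$-subgroup of $G$ containing (a conjugate of) $h$, and set $C := C_S(h)$. Since $h$ is central in $C$, the group $C$ preserves $F$ setwise and fits into a short exact sequence
\[
1 \to K \to C \to \bar{C} \to 1,
\]
where $\bar{C}$ is the image of $C$ in $\mathrm{Diff}(F)$. The image $\bar{C}$ is a finite subgroup of $\mathrm{Diff}(S^1)$, hence cyclic or dihedral, of $2$-rank at most $2$. Each element of $K$ fixes $F$ pointwise and preserves the ambient orientation, so linearization at a point of $F$ embeds $K$ into $\mathrm{SO}(2)$; thus $K$ is cyclic, of rank at most $1$. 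Consequently every subgroup of $C$ has rank at most $3$.

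The hard step is the transfer: given an arbitrary $P \leq S$, bound $d(P)$ by $4$. If $h \in P$, I would study the conjugation action of $P$ on $h$, whose orbit has $2$-power size $[P:C_P(h)]$ and whose elements are in bijection with the distinct circles among $\{gF : g \in P\}$; indeed, two involutions with the same circular fixed set must agree, since both act as $-\mathrm{Id}$ on each normal plane and therefore coincide on a neighbourhood of $F$. Choosing a chain of index-$2$ subgroups from $C_P(h)$ up to $P$ gives the crude bound $d(P) \leq d(C_P(h)) + \log_2[P:C_P(h)] \leq 3 + \log_2[P:C_P(h)]$. Reaching $d(P) \leq 4$ therefore requires sharpening: the $2$-group $P$ cannot gain too many independent generators from the coset structure, because the fixed circles $gF$ must fit compatibly inside $M$ under a $2$-group action.

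The main obstacle is precisely this last estimate, together with its extension to subgroups $P \leq S$ that do not contain $h$ at all. In the latter case I would try to locate an involution in $Z(P)$ that is $G$-conjugate to $h$, or failing that invoke a general structural result (Alperin fusion, or a Gorenstein--Harada-style classification of $2$-groups of bounded sectional rank) to control $d(P)$ in terms of the ranks of centralizers of involutions. Carrying this programme through carefully is where I expect the technical weight of \cite{RZ1} to lie.
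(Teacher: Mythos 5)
Your ``local step'' is correct, but it is not new relative to what the paper already quotes: the structure you derive for $C_S(h)$ (an extension of a cyclic or dihedral group acting on the fixed circle by a cyclic group of rotations about it) is exactly \fullref{prop1}, i.e.\ \cite[Lemma 1]{MZ2}, which gives $C_G(h)\le {\Bbb Z}_2\ltimes({\Bbb Z}_a\times{\Bbb Z}_b)$ and hence sectional rank at most $3$ for $C_S(h)$. The genuine content of \fullref{thm0} is precisely the step you label ``hard'' and then leave unproved: passing from control of the centralizer of one involution to a bound on $d(P)$ for \emph{every} $2$--subgroup $P$ of a Sylow $2$--subgroup, including those not containing any conjugate of $h$. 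Your only quantitative tool there, $d(P)\le d(C_P(h))+\log_2[P:C_P(h)]$, is unbounded, and the remark that the translated circles $gF$ ``must fit compatibly inside $M$'' is not converted into any estimate; as it stands nothing in the proposal rules out $d(P)=5$. Note also that the local bound cannot be mechanically upgraded: $C_G(h)$ itself may contain ${\Bbb Z}_2\times{\Bbb Z}_2\times{\Bbb Z}_2$ (the semidirect product above is elementary abelian of rank $3$ when $a=b=2$), so the rank-$3$ bound on the centralizer is sharp and the jump to $4$ for the whole $2$--group is delicate, not a bookkeeping matter.

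For comparison, the paper does not reprove \fullref{thm0} at all --- it cites \cite{RZ1} --- and the known argument is not a direct geometric propagation of the centralizer bound. It combines the geometric input (the centralizer structure of \fullref{prop1}, together with facts of the type of \fullref{prop2}, which says that a ${\Bbb Z}_2\times{\Bbb Z}_2\times{\Bbb Z}_2$ action must contain an involution acting freely or with nonconnected fixed point set) with nontrivial finite $2$--group theory from the classification literature, in the spirit of MacWilliams' four-generator theorem (a $2$--group with no normal elementary abelian subgroup of rank $3$ has sectional rank at most $4$): one constrains the possible normal elementary abelian subgroups of a Sylow $2$--subgroup containing $h$ and then invokes such a structure theorem. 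Your closing suggestion to ``invoke Alperin fusion or a Gorenstein--Harada-style result'' gestures in this direction, but since no such result is stated, verified to apply, or combined with the geometry, the proposal has a genuine gap exactly where the theorem's weight lies.
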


In this paper we try to analyze the whole group. We  describe the structure of the group ``up to solvable sections''. The interest for nonsolvable groups is also motivated by geometry. For example,  if  two knots  have the same hyperbolic  cyclic branched covering  $M$ and the isometry group of $M$ is solvable, then it is possible to describe  the relation between the two knots \cite{RZ1}. The problem is not completely solved  if the isometry group is not solvable.

We summarize part of the description  in the following theorem; we recall that a group $E$ is \textit{semisimple} if it is perfect and the factor group of $E$  by its center is a direct product of nonabelian simple groups (see  Suzuki \cite[Chapter 6.6]{S2} or Gorenstein, Lyons and Solomon \cite[p\,16]{GLS1}).

\begin{thm}\label{thm1}
 Let $G$ be a finite group of orientation-preserving diffeomorphisms of a closed orientable $3$--manifold; we denote by $\O(G)$ the maximal normal subgroup of odd order and by   $E$  the maximal semisimple normal  subgroup of $G/\O(G)$.   Suppose that  $G$ contains an involution with nonempty connected fixed point set. 
\begin{enumerate}
\item If the semisimple group $E$ is not trivial, it has at most two components and the factor group of  $G/\O(G)$ by $E$ is solvable. Moreover the factor group of  $E$ by its center is  either a simple group of sectional $2$--rank at most four or  the direct product of two simple groups with sectional $2$--rank at most two. 

\item If $E$ is trivial, there exists a normal subgroup $N$ of $G$ such that $N$ is solvable and $G/N$ is isomorphic to a subgroup of  ${\rm GL}(4,2)$, the general linear group of $4\times 4$ matrices over the finite  field with 2 elements. 
\end{enumerate}
\end{thm}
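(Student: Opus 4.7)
My plan is to pass to $\bar G = G/\O(G)$, which satisfies $\O(\bar G) = 1$, and to exploit Theorem~\ref{thm0} together with the Gorenstein--Harada classification of finite groups of sectional $2$--rank at most four. In $\bar G$ the generalized Fitting subgroup decomposes as $F^{*}(\bar G) = F(\bar G)\,E$, where $F(\bar G)$ is a $2$--group (because there are no nontrivial normal subgroups of odd order) and $E$ is the layer; the self-centralizing property $C_{\bar G}(F^{*}(\bar G))\le F^{*}(\bar G)$ drives the structural analysis, since it forces $\bar G/F^{*}(\bar G)$ to embed into $\mathrm{Out}(F^{*}(\bar G))$.

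For the component count I would use that every nonabelian finite simple group has a noncyclic Sylow $2$--subgroup, hence $2$--rank at least $2$. A central product of $k$ quasisimple components therefore has sectional $2$--rank at least $2k$, and Theorem~\ref{thm0} forces $k\le 2$; in the case $k=1$ the unique component has sectional $2$--rank at most $4$, while for $k=2$ each component has sectional $2$--rank at most $2$. The Gorenstein--Harada classification then cuts the allowed simple factors of $E/Z(E)$ down to a short explicit list.

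For part (1), once the layer is nontrivial the remaining issue is the solvability of $\bar G/E$. I would argue in two steps. First, $\bar G/F^{*}(\bar G)$ embeds into $\mathrm{Out}(F^{*}(\bar G))$, and for every semisimple $E$ on the Gorenstein--Harada list the outer automorphism group $\mathrm{Out}(E)$ is solvable of bounded order. Second, the residual action on the $2$--group $F(\bar G)$ (of sectional rank at most four) is constrained by the topological hypothesis: the involution $t$ with connected nonempty fixed point set, together with Smith-theoretic restrictions on its centralizer, prevents a nonsolvable outer action on $F(\bar G)$. This second step, tying the group-theoretic structure to the topology of the fixed point set, is the main obstacle; it is where the geometric hypothesis---not only abstract group theory---enters in an essential way.

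For part (2), when $E=1$ we have $F^{*}(\bar G)=F(\bar G)$, a $2$--group, and by Theorem~\ref{thm0} its Frattini quotient $V = F(\bar G)/\Phi(F(\bar G))$ is elementary abelian of rank at most $4$ (Burnside's basis theorem). Conjugation defines a homomorphism $\varphi\colon\bar G\to\mathrm{GL}(V)\le\mathrm{GL}(4,2)$; its kernel $K$ contains $F(\bar G)$, and since automorphisms of a $p$--group acting trivially on the Frattini quotient form a $p$--group, $K/C_{\bar G}(F(\bar G))$ is a $2$--group. Because $C_{\bar G}(F(\bar G))=C_{\bar G}(F^{*}(\bar G))\le F(\bar G)$, the subgroup $K$ is itself a $2$--group. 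Taking $N$ to be the preimage of $K$ in $G$ gives a normal subgroup that is an extension of $\O(G)$ (of odd order) by a $2$--group, hence solvable, and $G/N$ embeds into $\mathrm{GL}(4,2)$ as required.
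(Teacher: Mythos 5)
Your treatment of part (2) is correct and is essentially the paper's own argument (Frattini quotient of the $2$--group $F(G/\O(G))$, kernel of the action a $2$--group because $C_{G/\O(G)}(F)\le F$, rank at most four from \fullref{thm0}, then pull back to $N$); the same is true of your component count for $E$. The genuine gap is exactly the step you yourself flag as ``the main obstacle'' in part (1): you never prove that the residual action on the $2$--group part of the generalized Fitting subgroup is solvable, you only assert that ``Smith-theoretic restrictions on the centralizer'' of the distinguished involution ``prevent a nonsolvable outer action''. No such argument is supplied, and the mechanism you propose is not the one that works: in the paper the topological hypothesis enters the proof of \fullref{thm1} \emph{only} through \fullref{thm0} (the sectional $2$--rank bound); no further Smith-theoretic control of centralizers is used, and solvability of the centralizer of one involution would not by itself rule out a quotient of $C_{G/\O(G)}(E)$ mapping onto $\mathrm{GL}(3,2)$ or $\mathrm{GL}(4,2)$.

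The missing argument runs as follows. Put $C=C_{G/\O(G)}(E)$ and $D=CE$, a central product with $C\cap E=Z(E)$, a $2$--group. Since $E/Z(E)$ already has sectional $2$--rank at least two and $D/Z(E)\cong E/Z(E)\times C/Z(E)$ has sectional $2$--rank at most four by \fullref{thm0}, the group $C/Z(E)$ has sectional $2$--rank at most two. By maximality of $E$ and of $\O(G)$, the Fitting subgroup $F(C)$ is a $2$--group containing its own centralizer in $C$, so $C$ modulo a normal $2$--subgroup embeds in $\mathrm{GL}(d,2)$ acting on $F(C)/\Phi$, with $d\le 4$. The key point you are missing: $Z(E)\le F(C)$ is centralized by all of $C$, and the rank bound on $F(C)/(Z(E)\cdot\Phi)$ forces enough involutions of $F(C)/\Phi$ to be images of $Z(E)$, hence fixed by the conjugation action; counting fixed involutions excludes elements of order $7$ when $d=3$ and of orders $5$ and $7$ when $d=4$, so the image has order $2^a3^b$ and is solvable by Burnside's theorem. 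Thus $C$ is solvable, and since $(G/\O(G))/D$ embeds in $\mathrm{Out}(E)$, which is solvable by the Schreier property (one passes from $\mathrm{Out}(E)$ to $\mathrm{Out}(E/Z(E))$ using the three-subgroups lemma and perfectness of $E$), the factor group $(G/\O(G))/E$ is solvable. Without an argument of this kind, part (1) of your proposal remains unproved.
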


The simple groups of sectional $2$--rank at most four are classified by the Gorenstein--Harada Theorem \cite[p\,6]{G},  an important part of the classification of finite simple groups.
A well-known part of the classification,  which was proved before then  the Gorenstein--Harada Theorem,  is the classification of finite simple groups of $2$--rank at most two (ie every elementary $2$--subgroup is generated by at most two elements) \cite[p\,6]{G};  obviously sectional $2$--rank at most two implies $2$--rank at most two.

More details are given in \fullref{Section 3} where \fullref{thm1} is proved.  If $E$ is not trivial, the structure of the solvable group $(G/\O(G))/E$ is well understood.  Also in the second case, if we suppose that the group $G$ is not solvable, a short list of candidates for the group $G/N$ can be produced (the nonsolvable subgroups of   ${\rm GL}(4,2)\cong \A_8$ can be easily deduced from \cite{A}). 

In the   study  of cyclic branched coverings of knots, we are mainly interested in the case when the projection  of the involution with nonempty connected fixed point set is contained in $E$, the maximal semisimple  normal  subgroup. Under this condition the list of candidates is much shorter.

\begin{thm}\label{thm2}
Let $G$ be a finite group of orientation-preserving diffeomorphisms of a closed orientable $3$--manifold; we denote by $\O(G)$ the maximal normal subgroup of odd order and by   $E$  the maximal semisimple normal subgroup of $G/\O(G)$. Suppose that $G$ contains an involution $h$ with nonempty and connected fixed point set such that the coset $h\O(G)$ is contained in $E$; then $G/\O(G)$ has a normal subgroup $D$ isomorphic to one of the following groups:
$${\rm PSL}(2,q),  \quad{\rm PSL}(2,q)\times  \Z_2 \quad  \hbox{or} \quad {\rm SL}(2,q) \times_{\Z_2} {\rm SL}(2,q')$$
where $q$ and $q'$ are odd prime powers greater than four.
The factor group $(G/\O(G))/D$ contains, with index at most two, an abelian subgroup of rank at most four.
\end{thm}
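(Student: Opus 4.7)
The plan is to combine Theorem \ref{thm1} with a careful analysis of which simple groups of sectional $2$--rank at most four can accommodate an involution whose fixed point set on a $3$--manifold is connected.

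First, pass to $\bar G := G/\O(G)$ and let $\bar h$ denote the image of $h$. Since $h$ has even order and $\O(G)$ has odd order, $\bar h$ is an involution, and by hypothesis $\bar h\in E$, so in particular $E\ne 1$. Theorem \ref{thm1}(1) then applies: $E$ has at most two quasisimple components, $\bar G/E$ is solvable, and $E/Z(E)$ is either a single simple group of sectional $2$--rank at most four or the direct product of two simple groups of sectional $2$--rank at most two.

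The central step is to show that every simple composition factor of $E/Z(E)$ must be of the form $\mathrm{PSL}(2,q)$ with $q$ an odd prime power greater than $4$. Using the Gorenstein--Harada list of simple groups of sectional $2$--rank at most four, I would go through the candidates and, for each, examine the conjugacy classes of involutions and the structure of their centralizers. The geometric constraint is that the centralizer $C_G(h)$ acts on the connected fixed surface $\mathrm{Fix}(h)$ with kernel essentially $\langle h\rangle$, so $C_{\bar G}(\bar h)/\langle\bar h\rangle$ maps with small kernel into the diffeomorphism group of a single closed orientable surface. In particular the $2$--local structure of $C_{\bar G}(\bar h)$ is very restricted. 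For groups such as $\mathrm{PSL}(3,q)$, $\mathrm{PSU}(3,q)$, $\A_7$, $M_{11}$, $M_{12}$, $U_3(4)$, and the other sporadic/exceptional entries of the Gorenstein--Harada list, involution centralizers are incompatible with this picture (a comparable elimination has already been carried out in \cite{MZ1} for $\Z_2$--homology $3$--spheres, and the same style of argument should go through here because the connected--fixed--set hypothesis yields even stronger control on $C_G(h)$). What survives is the dihedral involution centralizer in $\mathrm{PSL}(2,q)$, $q$ odd, $q>4$. This case analysis is the main obstacle.

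With the simple composition factors pinned down, reconstruct $E$ itself. Each quasisimple cover of $\mathrm{PSL}(2,q)$ for $q$ odd is $\mathrm{PSL}(2,q)$ or $\mathrm{SL}(2,q)$. If $E$ has one component: either $E\cong \mathrm{PSL}(2,q)$, giving $D=E$, or $E\cong \mathrm{SL}(2,q)$, in which case enlarging $E$ by a central involution of $\bar G$ produces a normal subgroup $D\cong \mathrm{PSL}(2,q)\times \Z_2$ (here one uses that the center of $\bar G$ normalises $E$ and contributes precisely a $\Z_2$ factor to the layer). If $E$ has two components, both must be covers of $\mathrm{PSL}(2,q)$, $\mathrm{PSL}(2,q')$ with $q,q'>4$ odd; the condition that $\bar h$ lies in $E$ and has connected $3$--manifold fixed set forces the two components to share a common central involution, yielding $D=E\cong \mathrm{SL}(2,q)\times_{\Z_2}\mathrm{SL}(2,q')$.

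Finally, analyse $\bar G/D$. Since $\bar G/E$ is solvable by Theorem \ref{thm1}(1) and $D/E$ is at most $\Z_2$, the quotient $\bar G/D$ is solvable. The conjugation action of $\bar G/D$ on $D$ embeds it, modulo the (abelian) group of inner automorphisms coming from $Z(D)$, into $\mathrm{Out}(D)$. For $D$ as above, $\mathrm{Out}(D)$ is a well-known small solvable group (essentially a product of field--automorphism and diagonal pieces for each factor, plus a $\Z_2$ swapping the two factors in the central--product case). Combining this with Theorem \ref{thm0} (sectional $2$--rank at most four) yields an abelian subgroup of rank at most four and index at most two in $\bar G/D$, completing the statement.
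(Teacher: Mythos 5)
Your overall strategy is the paper's: apply Theorem 1, then use the Gorenstein--Harada list together with the restricted structure of the involution centralizer to force the components to be of ${\rm PSL}(2,q)$ type, and finally control the quotient via outer automorphisms. But the step you call ``the main obstacle'' -- the elimination over the Gorenstein--Harada list -- is precisely where the content of the theorem lies, and you have deferred it rather than proved it. Moreover the geometric constraint you invoke is misstated: for an orientation-preserving involution on an orientable $3$--manifold the connected fixed point set is a simple closed curve, not a surface, and the operative tool is Proposition 1 combined with Proposition 3: $C_{\bar G}(\bar h)$ is isomorphic to a subgroup of $\Z_2\ltimes(\Z_a\times\Z_b)$ with the $\Z_2$ acting by inversion (``admissible'' in the paper's terminology). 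The paper must first extract from this the structure of $Z(E)$, show that the centralizer of the image of $\bar h$ in $E/Z(E)$ is admissible, deduce (in the one-component case) that $E/Z(E)$ has a single conjugacy class of involutions and control its Sylow $2$--subgroup, and only then does the case-by-case elimination (including $\A_7$, $\A_7^*$, $M_{11}$, ${\rm PSL}(3,q)$, ${\rm PSU}(3,q)$, $J_1$, ${\rm PSL}(2,8)$, $^2G_2(3^n)$) go through. Appealing to the elimination in \cite{MZ1} is not a substitute: that argument concerns $\Z_2$--homology $3$--spheres, where Dotzel--Hamrick gives global control of all $2$--subgroups, which is unavailable here; the present hypothesis only controls centralizers of involutions with connected nonempty fixed point set.

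The reconstruction and quotient steps also contain concrete errors. In the one-component case $E\cong{\rm SL}(2,q)$ cannot occur at all: its unique involution is central, so $\bar h$ would lie in $Z(E)$ and $C_E(\bar h)=E$ would be nonsolvable, contradicting Proposition 1; and in any case no group containing ${\rm SL}(2,q)$ can be isomorphic to ${\rm PSL}(2,q)\times\Z_2$, so your proposed source of the $\Z_2$ factor is group-theoretically impossible. In the paper that factor comes from $C=C_{\bar G}(E)$: one sets $D=E\cdot C$, uses $\bar h\in E$ and Proposition 1 to show $C$ is a $2$--group, and uses the fact that the only abelian $2$--subgroup of rank three in $\Z_2\ltimes(\Z_a\times\Z_b)$ is elementary of order eight to conclude $C$ is trivial or $\Z_2$ when $E\cong{\rm PSL}(2,q)$ (and $C\subseteq Z(E)$ in the two-component case). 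This choice of $D$ is also what makes the last claim work: $\bar G/D$ then embeds in ${\rm Out}(E)\cong\Z_2\times\Z_n$ (or, for the central product, contains ${\rm Out}(A)\times{\rm Out}(B)$ with index at most two). With your $D=E$ the kernel of $\bar G/D\to{\rm Out}(D)$ is $C_{\bar G}(D)D/D$, which you have not bounded; ``inner automorphisms coming from $Z(D)$'' is not a correct description of it. Finally, in the two-component case the assertion that the components must share a central involution needs an argument (the paper: if $Z(E)$ were trivial, every involution in ${\rm PSL}(2,q)\times{\rm PSL}(2,q')$ has centralizer containing an elementary abelian group of order sixteen; if $E$ were a direct product of quasisimple covers, every involution of $E$ would be central with nonsolvable centralizer), and the possible $\A_7^*$ components must be excluded by a separate computation with elements of order four.
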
 

The group ${\rm SL}(2,q)$ is the special linear group of $2\times 2$ matrices of determinant one over the finite Galois field with $q$ elements. The group ${\rm SL}(2,q)$ is a perfect group which has a unique involution;
this involution generates its center $Z$, and the factor  group ${\rm
SL}(2,q)/Z$ is the projective special linear group  ${\rm PSL}(2,q)$ (which is a
simple group for $q\ge 4$).

The group ${\rm SL}(2,q) \times_{\Z_2} {\rm SL}(2,q')$ is a central product where the involutions in the centers of   ${\rm SL}(2,q)$ and $ {\rm SL}(2,q')$ are identified.

\fullref{thm2} is not simply a  specialization of \fullref{thm1}. We have to do some new work to prove properties of $E$, using directly the fact that $E$ contains the projection of  $h$;  we need also more precise information  about  finite simple groups in the Gorenstein--Harada list.

Probably it is possible to exclude  some groups with sectional $2$--rank at most four also in the general case considered in \fullref{thm1}. A possible  approach is to suppose that the special involution is not in $E$ and consider $\Z_2$--extensions of the simple groups in the Gorenstein--Harada list; some  $\Z_2$--extensions   may have again sectional $2$--rank at most four. At the moment we are  not sure if this  approach case by case, that might be  rather technical and long, can produce a relevant reduction of the list of  the possible groups.

As a corollary of \fullref{thm1}  we can consider the case of semisimple groups (see Reni and Zimmermann \cite{RZ1} for the case of simple groups).

\medskip{\bf Corollary}\qua
{\sl Let $G$ be a semisimple finite group of orientation-preserving diffeomorphisms of a closed orientable $3$--manifold. If $G$ contains an involution $h$ with nonempty and connected fixed point set, then $G$ is isomorphic to one of the following groups:
$${\rm PSL}(2,q)  \quad\hbox{or} \quad {\rm SL}(2,q) \times_{\Z_2} {\rm SL}(2,q')$$
where $q$ and $q'$ are odd prime powers greater than four.}

\medskip We focus now on  some  applications. 
We describes first some  results concerning actions of finite groups on homology $3$--spheres. 

Let  $f$ be a  nontrivial orientation-preserving periodic diffeomorphism of a $3$--manifold $M$. We say that $f$  is   \textit{hyperelliptic} if the quotient orbifold $M/f$ has underlying topological space homeomorphic to $S^3$. 

Using the structure of the finite  $2$--subgroups  acting on  $\Bbb{Z}_2$--homology $3$--spheres, Reni~\cite{R} proved that, up to conjugacy, there are at most nine hyperelliptic involutions acting on a hyperbolic $\Bbb{Z}_2$--homology $3$--sphere; we recall that a hyperelliptic involution on a $\Bbb{Z}_2$--homology $3$--sphere has nonempty connected fixed point set. This is equivalent to say that there exist at most nine inequivalent $\pi$--hyperbolic knots with the same $2$--fold branched covering.

Boileau, Paoluzzi and Zimmermann  \cite{BPaZ}  proved that, up to conjugacy,  at most four cyclic groups generated by a hyperelliptic diffeomorphism of odd prime order can act on an irreducible  integer homology $3$--sphere.  Thus an irreducible integer homology $3$--sphere can be the cyclic branched covering with odd prime order of at most four inequivalent knots. Also in this case a hyperelliptic diffeomorphism of prime order has nonempty connected fixed point set. The characterization of the finite nonsolvable groups which act on integer homology $3$--spheres   plays an important role in the proof in the  hyperbolic case. We remark that one of the basic steps in the proof of the upper  bound is the fact that hyperelliptic diffeomorphisms often commute  and   nonabelian situations are, in some sense, exceptions that can be  described.

The commutativity of hyperelliptic diffeomorphisms  corresponds in the language of knots  to the standard abelian construction. 

\medskip{\bf The standard abelian construction}\qua   
Suppose $M$ is the $n$--fold and $m$--fold cyclic branched covering of two knots $K$ and $K^{\prime}$, respectively.
We denote by $H$ and $H'$ the cyclic transformation groups of $K$ and $K^{\prime }$, respectively; the preimage $\tilde K$ (resp.\ $\tilde K^{\prime} $)  of $K$  (resp.\ $K^{\prime}$) in $M$ is  the fixed point set of $H$ (resp.\ $H'$).
 The groups $H$ and $\smash{H'}$ commute  and they generate a group $A$  of diffeomorphisms of $M$  isomorphic to $\Bbb{Z}_n\times \Bbb{Z}_m$; when $n=m$ the group $A$ has rank two and it is isomorphic to   $\Bbb{Z}_n\times \Bbb{Z}_n$.
Each element of the transformation group $H$ (resp.\ $H'$) induces a rotation on $\smash{\tilde K ^{\prime}}$ (resp.\ $\smash{\tilde K}$), 
and the quotient orbifold $M/A$ is the $3$--sphere whose  singular   set is a link $L=\bar K \cup \bar K'$, where $\bar K$ (resp.\ $\bar K'$) is the projection of $K$ (resp.\ $K'$).

We remark that by  the positive solution to the Smith Conjecture both  components of $L$  are trivial knots. On the other hand, starting from $L$, we can obtain $K$ (resp.\ $\smash{K'}$) taking  the preimage of $\bar  K$ (resp.\ $\bar K'$) in the  $m$--fold (resp.\ $n$--fold) cyclic branched covering  of $\bar K'$ (resp.\ $\bar K$).
This construction serves to study the  relation between two links with the same hyperbolic cyclic branched covering (see  Reni and Zimmermann \cite{RZ2} and  Mecchia \cite{M}).   The standard abelian construction is the unique possibility in many different situations.

\begin{thmn}{\rm \cite{RZ2}}\qua
Let $M$ be a hyperbolic $3$--manifold. Suppose that $M$  is the $n$--fold and $m$--fold cyclic branched covering of inequivalent  knots $K$ and $K'$, respectively, such that $m$ and $n$ are  not  powers of two. Suppose that one of the following conditions holds:
\begin{enumerate}
 \item   $n$ and $m$ have a common prime divisor different from two;

 \item  $K$ is not  strongly invertible and $K$ is  not self-symmetric with order $n$;

 \item  The orientation-preserving isometry group of $M$ is solvable.
\end{enumerate}    
Then $K$ and $K'$  arise from the standard  abelian construction.
\end{thmn}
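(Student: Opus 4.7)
\medskip

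\noindent\textbf{Proof plan}\qua The plan is to realize all the data inside the finite isometry group $G=\mathrm{Isom}^+(M)$, which is finite by Mostow rigidity; after replacing the deck transformations by conjugate isometries, we may assume $H=\langle\psi\rangle$ and $H'=\langle\psi'\rangle$ are cyclic subgroups of $G$. The standard abelian construction is, by its own definition, precisely the case in which $H$ and $H'$ commute, so the whole content of the theorem is to show that under any of the three hypotheses one has $[\psi,\psi']=1$. We use throughout that every nontrivial power of $\psi$ has the same nonempty connected fixed set $\tilde K$, because the action of $H$ on $M$ is free off the branch locus (and similarly for $\psi'$).

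For case (1), pick an odd prime $p$ dividing both $n$ and $m$ and set $a=\psi^{n/p}$, $b=(\psi')^{m/p}$. These are rotations of odd prime order $p$ with connected fixed axes $\tilde K$ and $\tilde K'$. Since $p$ is odd, no involution of $\langle a,b\rangle$ can invert either generator, so dihedral-type generation is excluded; combined with the sectional $2$--rank bound of \fullref{thm0} and the short list of semisimple groups allowed by \fullref{thm2}, the finite subgroup $\langle a,b\rangle\le G$ is forced to be abelian, hence $[a,b]=1$. One then lifts commutativity from $(a,b)$ to $(\psi,\psi')$ by induction on the prime factorizations of $n$ and $m$: once $b\in C_G(a)$, one works inside $C_G(a)$ and iterates the same argument with the next odd prime divisor, using that all nontrivial powers of $\psi$ (resp.\ of $\psi'$) share the same axis.

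For case (2), the hypotheses on $K$ translate into strong rigidity of $H$ inside $G$: $K$ not strongly invertible says that no element of $G$ inverts $\psi$ by conjugation, and $K$ not self-symmetric of order $n$ says that no element of $G$ outside $C_G(H)$ induces a nontrivial automorphism of $H$. Together these force $N_G(H)=C_G(H)$. A short argument on the two linked singular axes in the quotient orbifold $M/\langle\psi\rangle\cong S^3$ shows that $\psi'$ must normalize $H$ (otherwise the projection of $\tilde K'$ would not be a knot in the quotient). Hence $\psi'\in C_G(H)$, giving the desired commutativity. For case (3), with $G$ solvable, we apply the structural results on solvable finite isometry groups of hyperbolic $3$--manifolds: two hyperelliptic rotations of orders $>2$ with connected fixed sets cannot generate a nonabelian solvable subgroup of $G$ without producing a normalizer--centralizer quotient acting nontrivially on a large cyclic group, which the geometric constraints on the axes of $\psi$ and $\psi'$ rule out.

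The main obstacle is case (3), where neither an odd prime common to $n$ and $m$ nor the asymmetry of $K$ is available; the whole argument there rests on the solvability hypothesis, and one must carefully exclude a metacyclic or ``twisted'' dihedral-of-cyclic configuration inside $G$ built from $H$ and $H'$. In all three cases, once the reductions above are in place, the proof comes down to showing that a particular finite subgroup of $G$ generated by two rotations with prescribed fixed-point behavior is forced to be abelian; the three hypotheses supply three independent routes to this conclusion, after which the description of the standard abelian construction given before the theorem identifies $K$ and $K'$ as arising from it.
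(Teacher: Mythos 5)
First, note that this statement is not proved in the paper at all: it is quoted from Reni--Zimmermann \cite{RZ2} as background, and the closest thing to a proof in this paper is the proof of \fullref{thm3} in \fullref{Section 5}, whose Steps 1--4 display the actual machinery such an argument needs. Your overall framing (conjugate the deck groups into the finite group $G$ of orientation-preserving isometries and show that $H$ and $H'$ commute elementwise) is the right one, but each of your three cases has a genuine gap. In case (1), the assertion that $\langle a,b\rangle$ ``is forced to be abelian'' because $p$ is odd and because of the sectional $2$--rank bound is not an argument: two elements of odd prime order can generate a nonabelian group of odd order, and \fullref{thm0} says nothing about $p$--subgroups for odd $p$; moreover invoking \fullref{thm2} is circular here, since its hypothesis (an involution $h$ with connected fixed set projecting into $E$) is not available. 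The correct route is Sylow-theoretic and geometric: conjugate the $p$--parts of $H$ and $H'$ into a common Sylow $p$--subgroup $S_p$, show $S_p$ is abelian of rank at most two by \fullref{prop1} applied to normalizers of rotations (together with the normalizer-growth argument $N_{S_p}(N)=N\Rightarrow S_p=N$, as in Step 2 of the proof of \fullref{thm3}), and then upgrade commutativity of the $p$--parts to commutativity of all of $H$ and $H'$ via the uniqueness of the cyclic group of given order fixing a given curve pointwise (Step 1 of that proof). Your proposed ``induction on the prime factorizations inside $C_G(a)$'' does not do this: at best it treats the odd parts of $H$ and $H'$ and never reaches their $2$--power parts.

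Cases (2) and (3) are weaker still. Your translation of ``$K$ is not self-symmetric with order $n$'' as ``no element of $G$ outside $C_G(H)$ induces a nontrivial automorphism of $H$'' is not what the definition says: self-symmetry concerns the existence of a second rotation group $fHf^{-1}$ of order $n$ with a different axis, commuting with $H$, together with a symmetry exchanging the two components of the quotient link (this is exactly how it enters in Step 3 of the proof of \fullref{thm3}, to show $N=N_0$). Likewise ``$\psi'$ must normalize $H$, otherwise the projection of $\tilde K'$ would not be a knot in the quotient'' is unfounded: $\psi'$ does not act on $M/H$ at all unless it normalizes $H$, so nothing about the quotient forces this, and the conclusion $N_G(H)=C_G(H)$ does not follow from the stated hypotheses. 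Finally, case (3) is only an assertion; the known argument uses Hall subgroups of the solvable group $G$ to produce a subgroup of odd order (hence with no involutions at all) containing nontrivial rotations from both $H$ and $H'$, and then runs the same Sylow/normalizer analysis (compare Steps 3 and 4 of the proof of \fullref{thm3}, where Burnside's splitting theorem and Hall's theorem are the actual tools). As it stands, the proposal identifies the correct target ($[H,H']=1$) and the correct ambient setup, but none of the three cases is carried out by a valid argument.
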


A $2$--component link is called \textit{ symmetric} if  there exists an orientation-preserving diffeomorphism of $S^3$ which exchanges the $2$--components of the link.
A \textit{cyclic symmetry} of a knot $K$ is a  diffeomorphism of $(S^3,K)$ of finite order  and  with nonempty fixed point set $F$ disjoint from $K$. The set  $F$ is an unknotted circle by the positive solution to the Smith Conjecture. The quotient of $S^3$ by a cyclic symmetry is again the $3$--sphere and $F$ and $K$ project to  a $2$--component link.
We call a knot $K$ \textit{self-symmetric} with order $n$ if $K$ admits a cyclic symmetry $f$ of order $n$ such that the associated quotient link  is symmetric.

We use \fullref{thm2}  to generalize  point 2 of the previous Theorem. We want to include also  the class of strongly invertible knots that is largely studied in knot theory. Unfortunately the standard  abelian construction does not remain the unique possibility.

\begin{thm}\label{thm3}
 Let $M$ be a hyperbolic $3$--manifold. Suppose that $M$ is  the $n$--fold and $m$--fold cyclic branched covering of two hyperbolic knots $K$ and $K'$, respectively, such that $m$ and $n$ are not  powers of two. Let  $G$ be the orientation-preserving isometry group of $M$ and $\O(G)$ the maximal normal subgroup of odd order. If  the knot $K$  is not self-symmetric with order $n$, then one of the following cases occurs:
\begin{enumerate}
\item $K$ and $K'$ arise from the standard  abelian construction;

\item $G$ contains  $h$,  an involution with nonempty connected fixed point set, such that $h\O(G)$ is contained in the maximal normal semisimple subgroup of $G/\O(G)$ (in particular \fullref{thm2}  applies to $G$);

\item All  prime divisors of  $n$ and  $m$ are  contained in $\{2,3,5,7\}$ and   there exists a normal subgroup $N$ of $G$ such that $N$ is solvable and $G/N$ is isomorphic to a subgroup of  ${\rm GL}(4,2)$.  
\end{enumerate}
The   knots $K$ and $K'$ in \fullref{thm3} are inequivalent. It follows from volume considerations if $n\neq m$,  and from the fact that $K$ is not self-symmetric if $n=m$. 
\end{thm}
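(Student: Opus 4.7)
The plan is to combine the Reni--Zimmermann theorem stated above with Theorems~\ref{thm1} and~\ref{thm2}, via a case analysis on the generalized Fitting structure of $G/\O(G)$. Throughout, let $\psi_n,\psi_m\in G$ be generators of the cyclic deck transformation groups of the coverings $M\to S^3$ associated to $K$ and $K'$, of orders $n$ and $m$ respectively.

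First I carry out the reductions allowed by the Reni--Zimmermann theorem. If $n$ and $m$ share an odd prime divisor, its part~(1) yields conclusion~(1) of the present theorem; if $G$ is solvable, its part~(3) does; and if $K$ is not strongly invertible, then since by hypothesis $K$ is not self-symmetric of order~$n$, its part~(2) does. So I may assume $\gcd(n,m)$ is a power of two, $G$ is nonsolvable, and $K$ is strongly invertible. Under this last assumption, as recalled in the introduction, some lift $h\in G$ of the strong inversion of~$K$ is an involution of~$M$ with nonempty connected fixed point set, and $\langle h,\psi_n\rangle\le G$ is dihedral of order~$2n$. In particular $G$ fulfils the hypothesis of Theorem~\ref{thm1}.

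Let $E$ denote the maximal semisimple normal subgroup of $G/\O(G)$. If $E=1$, Theorem~\ref{thm1}(2) immediately gives a solvable normal subgroup $N\triangleleft G$ with $G/N\hookrightarrow\mathrm{GL}(4,2)$, supplying the structural part of conclusion~(3). If $E\ne 1$, I would argue that among the involutions in the coset $h\langle\psi_n\rangle\subset D_{2n}$, together (when $n$ is even) with the central involution $\psi_n^{n/2}\in\langle\psi_n\rangle$ whose fixed set is the connected circle $\tilde K$, at least one representative $h'$ satisfies $h'\O(G)\in E$. This uses the freedom to modify $h$ by an element of $\langle\psi_n\rangle$ together with the fact that $E$, being the semisimple layer, must absorb every coset of $\O(G)$ represented by a $2$-element whose centralizer projects onto a sufficiently large piece of the layer. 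Applying Theorem~\ref{thm2} to $h'$ then places us in conclusion~(2).

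Finally I verify the prime-divisor bound in conclusion~(3). Choose $N$ as the preimage in $G$ of the kernel of the action of $G/\O(G)$ on $O_2(G/\O(G))/\Phi(O_2(G/\O(G)))$; since $O_2(G/\O(G))$ has sectional $2$-rank at most four by Theorem~\ref{thm0}, this gives the $\mathrm{GL}(4,2)$-embedding, and since the kernel is a $2$-group on top of $\O(G)$, $N/\O(G)$ is a $2$-group, so every prime dividing $|G/\O(G)|$ lies in $\{2,3,5,7\}$. Hence for any odd prime $p>7$ dividing~$n$, the rotation $\phi=\psi_n^{n/p}$ of order~$p$ with connected fixed set~$\tilde K$ must already lie in $\O(G)$. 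Using the dihedral relation $h\phi h^{-1}=\phi^{-1}$ and the realisation of $\phi$ as a deck transformation of $M\to S^3$ branched over~$K$, one shows that the presence of such a $\phi$ forces either the standard abelian construction on $(K,K')$ or a cyclic self-symmetry of~$K$ of order~$n$, contradicting our standing hypotheses. The same argument bounds the primes dividing~$m$. The main obstacle is precisely this final prime-bound argument together with the lift-selection step in the $E\ne 1$ branch; both require careful control of the way connected-fixed-set involutions interact with the generalized Fitting structure of $G/\O(G)$.
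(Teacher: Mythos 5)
There is a genuine gap, and it sits exactly where you flag it yourself. In the branch $E\neq 1$ you assert that, after modifying $h$ by deck transformations (or replacing it by $\psi_n^{n/2}$ when $n$ is even), some involution $h'$ with connected nonempty fixed point set satisfies $h'\O(G)\in E$, on the grounds that $E$ ``must absorb every coset of $\O(G)$ represented by a $2$--element whose centralizer projects onto a sufficiently large piece of the layer.'' This is not a theorem, and the heuristic points the wrong way: by Proposition 1 an involution with connected nonempty fixed point set has a \emph{small} (metabelian, hence solvable) centralizer, so there is no reason for its coset to lie in the semisimple layer, and in general it need not. The correct dichotomy, which is the heart of the paper's proof, is: either some involution with connected nonempty fixed point set has image in $E$ (conclusion (2)), or else the preimage $\mathcal{E}$ of $E$ contains no such involution, and then one must \emph{prove} that $K$ and $K'$ arise from the standard abelian construction (conclusion (1)). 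Your argument has no route to conclusion (1) in the $E\neq1$ case beyond the Reni--Zimmermann reductions, and those reductions (common odd prime divisor, $K$ not strongly invertible, $G$ solvable) do not cover it. The paper handles it by a chain of lemmas: commutation of the two deck groups as soon as suitable odd-order subgroups normalize each other (Step 1), rank and normalizer control of odd Sylow subgroups containing deck rotations (Step 2), and the key Step 3, where a subgroup containing odd-order pieces of both deck groups but no involution with connected fixed set is shown to force the abelian construction, using Burnside's splitting theorem and, crucially, the hypothesis that $K$ is not self-symmetric with order $n$ to rule out a second conjugate rotation axis; Step 5 then builds, via the solvability of $G/\mathcal{E}$ from Theorem 1 and Hall--Sylow theory, a subgroup to which Step 3 applies. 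None of this is replaced by your lift-selection claim.

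The same machinery is also what is missing from your prime-bound argument in case (3): the sentence ``one shows that the presence of such a $\phi$ forces either the standard abelian construction or a cyclic self-symmetry of $K$'' is precisely the unproved content. In the paper, if $H\cap N$ contains a nontrivial odd-order element one uses Sylow theory in $G/N$ to produce a solvable subgroup $N'\supseteq N$ containing a Sylow $q$--subgroup for an odd prime $q\mid m$, and then Step 4 (solvable subgroups containing odd-order pieces of both deck groups yield the abelian construction, again via Step 3 and the non-self-symmetry hypothesis) gives conclusion (1); only then does the bound on the primes dividing $n$ and $m$ follow from $|{\rm GL}(4,2)|=2^6\cdot3^2\cdot5\cdot7$. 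Your opening reductions via the quoted Reni--Zimmermann theorem and the observation that a strong inversion lifts to an involution with connected fixed set are fine as far as they go, but the two steps you defer are not technical refinements: they are the proof.
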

As in the case of integer homology $3$--spheres, the noncommuting situations are, in some sense, exceptional.  For integer homology $3$--spheres there exists an universal bound to the number of cyclic groups generated by a  hyperelliptic diffeomorphism (with connected nonempty fixed point set) of odd prime order; we propose the following:

\medskip
{\bf  Conjecture}\qua
There exists a universal bound $C$ such that any hyperbolic orientable closed $3$--manifold admits  at most $C$ nonconjugate cyclic groups generated by a  hyperelliptic diffeomorphism  with connected nonempty fixed point set.

\medskip We remark that the condition about the fixed point set is necessary; in general, there is no universal bound for hyperelliptic diffeomorphisms in hyperbolic $3$--manifolds~\cite{RZ3}. We recall that Cooper and Long \cite{CL} proved  that  every finite   group admits an action on a hyperbolic rational homology $3$--sphere; to prove the conjecture the use of  homology may be insufficient.  Probably  we have to consider directly conditions   about  the fixed point sets of the diffeomorphisms, for example the existence of involutions with connected nonempty fixed point set (the hypothesis considered  in this paper).

\section{Preliminary results}\label{Section 2}
In this section we present some preliminary results  about finite groups acting on $3$--manifolds.

\begin{prop}\label{prop1}
Let $G$ be a finite group of orientation-preserving diffeomorphisms of a
closed orientable $3$--manifold and $f$ an element in $G$  with nonempty connected
fixed point set $K$. Then the normalizer $N_G (f)$ of the subgroup generated by $f$ in
$G$ is isomorphic to a subgroup of a semidirect product
$${\Bbb Z}_2 \ltimes ({\Bbb Z}_a\times {\Bbb Z}_b),$$ for some nonnegative
integers $a$ and
$b$, where a generator of ${\Bbb Z}_2$ (an
$f$--reflection, ie acting as a reflection on $K$ ) acts on the normal subgroup ${\Bbb Z}_a \times {\Bbb Z}_b$ of
$f$--rotations (ie the elements  acting  as rotations on $K$) by sending each element to its inverse. In particular, $N_G(f)$ is
solvable.
\end{prop}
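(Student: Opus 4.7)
The plan is to reduce the statement to the elementary classification of finite groups of isometries of a solid torus $S^1\times D^2$ that preserve its core circle. Since $f$ is a nontrivial finite-order orientation-preserving diffeomorphism of an orientable $3$--manifold, its fixed-point set is a disjoint union of smooth circles; by hypothesis this set is nonempty and connected, so $K\cong S^1$. The normalizer $N_G(f)$ permutes $\mathrm{Fix}(\langle f\rangle)=K$ and hence leaves $K$ invariant. After averaging a Riemannian metric to obtain an $N_G(f)$--invariant one, the exponential map of the normal bundle of $K$ identifies an invariant tubular neighborhood $U$ of $K$ equivariantly with $S^1\times D^2$ (the normal bundle is trivial because $K$ is a circle in an orientable $3$--manifold), on which $N_G(f)$ acts by isometries preserving the core $S^1\times\{0\}$.

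Next I would describe the possible elements $g\in N_G(f)$ in angular coordinates $(e^{i\phi},w)$ on $S^1\times D^2$. Such a $g$ restricts to a finite-order isometry of $S^1$ (a rotation or a reflection) and to a fibrewise orthogonal linear map of $D^2$; since $g$ preserves the orientation of $M$ and $T_pM\cong T_pK\oplus N_p$ splits at each $p\in K$, the induced actions on $K$ and on the normal disks must agree in orientation type. Hence every \emph{$f$--rotation} takes the form $(e^{i\phi},w)\mapsto(e^{i(\phi+\alpha)},e^{i\beta}w)$ and every \emph{$f$--reflection} the form $(e^{i\phi},w)\mapsto(e^{i(\alpha-\phi)},e^{i\beta}\bar w)$. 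The assignment $g\mapsto(\alpha,\beta)$ embeds the subgroup $N_G^+(f)$ of $f$--rotations into the torus $S^1\times S^1$, exhibiting it as a finite abelian group of rank at most two, so isomorphic to $\Z_a\times\Z_b$ for suitable $a,b$.

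Finally, a direct computation in these coordinates shows that conjugation of an $f$--rotation by an $f$--reflection sends the pair $(\alpha,\beta)$ to $(-\alpha,-\beta)$, so each $f$--reflection acts on $N_G^+(f)$ by inversion. Therefore $N_G(f)$ embeds into $\Z_2\ltimes(\Z_a\times\Z_b)$ with the prescribed inversion action, which is metabelian and in particular solvable. The delicate step will be the equivariant linearization: ensuring that $N_G(f)$ genuinely acts linearly, not merely topologically, on an invariant tubular neighborhood of $K$; this rests on the standard Bochner/averaging argument for smooth finite group actions. The substantive geometric input is the coupling of the orientations on the core and on the normal disks forced by global orientation-preservation on $M$, without which one would only obtain an embedding into the larger group $(\Z_2\times\Z_2)\ltimes(\Z_a\times\Z_b)$.
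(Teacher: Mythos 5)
The paper itself gives no argument for this proposition: it is quoted from Mecchia--Zimmermann [MZ2, Lemma~1], and your overall strategy -- restrict $N_G(f)$ to an invariant tubular neighbourhood of $K$ and read off the structure there -- is the natural one and close in spirit to that reference. Several of your points are correct and genuinely needed: $K$ is a circle, $N_G(f)$ leaves it invariant, the normal bundle is trivial, and orientation-preservation on $M$ couples the behaviour on $K$ with that on the normal fibres. The gap is at the decisive step: from ``$g$ restricts to an isometry of $S^1$ and to fibrewise orthogonal maps of the discs'' you pass to a single product coordinate system in which every $f$--rotation is $(e^{i\phi},w)\mapsto(e^{i(\phi+\alpha)},e^{i\beta}w)$ and every $f$--reflection is $(e^{i\phi},w)\mapsto(e^{i(\alpha-\phi)},e^{i\beta}\bar w)$ with \emph{constant} $\alpha,\beta$, simultaneously for all elements. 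That does not follow from what precedes it: a finite-order fibrewise-orthogonal bundle map over a rotation of $S^1$ need not have constant fibre angle in a given trivialization (for instance $(e^{i\phi},w)\mapsto(-e^{i\phi},e^{i\sin\phi}w)$ has order two), so the assertion that $g\mapsto(\alpha,\beta)$ is a well-defined homomorphism of the rotation subgroup into $S^1\times S^1$ with reflections acting by inversion is exactly the content of the proposition, not an observation. The Bochner/averaging step you flag as delicate only yields fibrewise linearity; it does not yield this simultaneous rigidity.

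The gap is fillable, in either of two ways. (a) Exploit the invariant metric further: the Levi--Civita parallel transport of the normal bundle along $K$ is preserved by $N_G(f)$ (any invariant metric connection would do, and over a $1$--dimensional base there is no curvature obstruction), so the fibre angle of each element measured against parallel frames is constant along $K$; building the trivialization from parallel frames, untwisted uniformly by the holonomy of $K$, then produces exactly your normal form. (b) Avoid the normal form: the subgroup fixing $K$ pointwise injects into ${\rm SO}(2)$ via its rotation angle at a point of $K$ (the angle is locally constant, hence constant along $K$), so it is cyclic, and it is centralized by every $f$--rotation since conjugation by an element preserving the orientations of $K$ and of the normal fibres preserves that angle; the image of the $f$--rotations in ${\rm Diff}^+(K)$ is finite, hence cyclic; a group which is a central extension of a cyclic group by a cyclic group is abelian and $2$--generated, giving ${\Bbb Z}_a\times{\Bbb Z}_b$. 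Any $u\in N_G(f)$ reversing the orientation of $K$ has a fixed point $p\in K$ where $du_p$ has eigenvalues $-1,1,-1$, so $u^2$ has identity differential at $p$ and, being of finite order, $u^2={\rm id}$; then for a rotation $g$ and a reflection $t$, $(tg)^2={\rm id}$ forces $tgt^{-1}=g^{-1}$, which is the inversion action. Either route turns your sketch into a proof; as written, the key structural fact is asserted rather than proved.
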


\begin{proof}  See Mecchia and Zimmermann \cite[Lemma 1]{MZ2}.\end{proof}

\begin{prop}\label{prop2}
Let $G$ be a finite group of orientation-preserving diffeomorphisms of a
closed orientable $3$--manifold. If $G$ is  isomorphic to ${\Bbb Z}_2 \times {\Bbb Z}_2 \times {\Bbb Z}_2$,  there exists in $G$ an  involution  that  either acts freely or has nonconnected fixed point set.
\end{prop}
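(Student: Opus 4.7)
The plan is to argue by contradiction: suppose every nontrivial element of $G$---all seven of them---is an involution with nonempty and connected fixed point set. Proposition 1 is then applied to each such involution.

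The first step is to pin down, for every involution $f\in G$, the subgroup $R_f\le G$ of elements acting as rotations on $K_f:=\mathrm{Fix}(f)$. Since $G$ is abelian, $N_G(f)=G$, so by Proposition 1 the group $G$ embeds in $\Z_2\ltimes(\Z_a\times\Z_b)$ with $R_f$ landing in the second factor. As a subgroup of $\Z_a\times\Z_b$, $R_f$ has elementary abelian $2$--rank at most two, so $|R_f|\le 4$; on the other hand $[G:R_f]\le 2$ forces $|R_f|\ge 4$. Hence $|R_f|=4$, and $f\in R_f$ because $f$ acts trivially---hence as a (trivial) rotation---on $K_f$.

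The crucial step is to show that $R_f$ is intrinsic to the subgroup itself: if $g\in R_f\setminus\{e\}$, then $R_g=R_f$. Since $g$ acts as a rotation on the circle $K_f$ and has order $2$, the restriction $g|_{K_f}$ is either the identity or the half--turn. In the first alternative $K_f\subseteq K_g$, and the connectedness of $K_g$ forces $K_f=K_g$; in the second alternative $K_f\cap K_g=\emptyset$. Either way the symmetric restriction $f|_{K_g}$ is a rotation---the identity in the first case and, in the second, the unique fixed--point--free involution of $K_g\cong S^1$, namely the half--turn. Thus $f\in R_g$, and the analogous computation gives $fg\in R_g$; equality of orders then yields $R_g=R_f$.

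The contradiction is now numerical: the assignment $f\mapsto R_f$ from the seven involutions of $G$ to its order--$4$ subgroups has fibres of cardinality exactly three, each image subgroup being hit by its three involutions. Hence $|\{\text{involutions of }G\}|$ would be a multiple of three, contradicting $7\not\equiv 0\pmod 3$. The decisive input is the dichotomy in the second step: without the hypothesis that every involution has a \emph{connected} fixed set, the alternative $K_f\subseteq K_g$ would not collapse to $K_f=K_g$, and the symmetry ``$g\in R_f\Rightarrow f\in R_g$'' could fail. The rest is bookkeeping modulo three.
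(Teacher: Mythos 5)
Your proof is correct. It shares its starting point with the paper's argument: applying \fullref{prop1} to the abelian group $G\cong{\Bbb Z}_2\times{\Bbb Z}_2\times{\Bbb Z}_2$ to get, for each involution $f$, exactly four $f$--rotations forming a rank--two subgroup $R_f$ containing $f$, and exactly four $f$--reflections; and your pivotal fact --- that $R_g=R_f$ whenever $g$ is a nontrivial element of $R_f$ --- is precisely the assertion the paper makes for $r$ and $rf$ (``both the subgroups of $r$ and $rf$--rotations coincide with the subgroup of $f$--rotations''), which you justify in more detail via the dichotomy $g|_{K_f}=\mathrm{id}$ (forcing $K_f=K_g$ by connectedness) versus $g|_{K_f}$ free (forcing $K_f\cap K_g=\emptyset$, so that $f$ restricts to a fixed-point-free, hence orientation-preserving, involution of $K_g$). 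Where you genuinely diverge is the endgame. The paper fixes an $f$--reflection $t$ and, by chasing common fixed points, shows that $t$ acts as a reflection on the fixed circle of every other involution and conversely, producing six $t$--reflections and contradicting the bound of four coming from \fullref{prop1}; your argument never touches the reflections again, but instead notes that the assignment $f\mapsto R_f$ partitions the seven involutions into fibres consisting of the three involutions of each image subgroup, contradicting $7\not\equiv 0\pmod 3$. The trade-off: your route supplies a complete proof of the coincidence-of-rotation-subgroups step that the paper states tersely, and replaces the reflection chase by clean mod--$3$ bookkeeping; the paper's route is shorter once that coincidence is granted, at the cost of several ``common fixed point'' verifications left implicit.
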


\begin{proof} By contradiction we suppose that the seven involutions in $G$ have connected and nonempty fixed point set. Let $f$ be one of the involutions, by \fullref{prop1} the group  $G$ contains  four $f$--rotations and four $f$--reflections. We denote by $r$ one $f$--rotation of order two different from $f$ and we denote by $t$ one $f$--reflection, so the four $f$ rotations are Id, $f,\,r$ and $rf$ and the four $f$ reflections are $t,\,tf,\,tr$ and $tfr$.
Since the fixed point sets of $f$ and $t$ have nonempty intersection then the fixed point sets  of $t$ and $tf$ have nonempty intersection, then $t$ is a $tf$--reflection.
Now we consider $r$ and $rf$, both of them have nonempty connected fixed point set and both the subgroups of $r$ and $rf$--rotations coincide with the subgroup of $f$--rotations. We deduce that $t$ is an $r$--reflection and an $rf$--reflection and consequently $t$ is an $rt$--reflection and an $rft$--reflection.
It turns out that $t$ acts  as a reflection on the fixed point set of each involution in $G$ different from $t$ and viceversa each involution in $G$ different from $t$ acts as a reflection on the fixed point set of $t$;  so in $G$ we have six $t$--reflections and this is impossible by \fullref{prop1}.
\end{proof}
We conclude the section with a purely algebraic  proposition that describes   the centralizer  of an involution in the factor groups by odd order normal subgroups. This  proposition shows that, for an involution $t$, the quotient of the centralizer is the centralizer of the projection of $t$ in the quotient; its proof is elementary but we  often  use this fact.

\begin{prop}\label{prop3}
Let $t$  be an involution in a  finite group $G$ and let  $N$ be  a normal subgroup of $G$ of odd order. Then the centralizer $C_{G/N}(tN)$ of the coset $tN$ in $G/N$ is isomorphic to $C_{G}(t)/(C_{G}(t)\cap N),$ that is the factor group of the centralizer of $t$ in $G$ by the intersection  $C_{G}(t)\cap N$.
\end{prop}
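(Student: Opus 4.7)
The plan is to exhibit the claimed isomorphism as the first-isomorphism-theorem image of the obvious reduction homomorphism, and to obtain surjectivity via a Glauberman-style fixed-point argument that exploits the parity hypothesis.

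First I would define the natural map
\[
\phi\colon C_G(t)\longrightarrow C_{G/N}(tN),\qquad c\longmapsto cN.
\]
This is well-defined, because if $ct=tc$ then $(cN)(tN)=(ct)N=(tc)N=(tN)(cN)$, and it is clearly a homomorphism. Its kernel is exactly $C_G(t)\cap N$, since $cN=N$ amounts to $c\in N$. So the content of the proposition reduces to showing that $\phi$ is surjective.

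To prove surjectivity, take any coset $gN\in C_{G/N}(tN)$. The commutation $(gN)(tN)=(tN)(gN)$ translates to $g^{-1}t^{-1}gt\in N$, i.e.\ $tgt^{-1}\in gN$. Because $N$ is normal in $G$, this means the coset $gN$ is setwise preserved by the conjugation action of $t$ on $G$. Now the key step: the involution $t$ acts on the finite set $gN$, which has cardinality $|N|$, and $|N|$ is odd. Since every orbit of a $\langle t\rangle$-action has size $1$ or $2$, an odd total forces at least one fixed point $c\in gN$ with $tct^{-1}=c$. Such a $c$ lies in $C_G(t)$ and satisfies $cN=gN=\phi(c)$, giving surjectivity.

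The main (indeed only) delicate point is this fixed-point argument, which is where the odd-order hypothesis on $N$ is essential; without it an entire coset could be permuted freely by $t$ and the map $\phi$ would fail to be onto. Everything else is bookkeeping, after which the first isomorphism theorem yields
\[
C_{G/N}(tN)\cong C_G(t)/(C_G(t)\cap N),
\]
as desired.
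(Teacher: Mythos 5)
Your proposal is correct: the reduction to surjectivity of the natural map $\phi\colon C_G(t)\to C_{G/N}(tN)$, the identification of its kernel with $C_G(t)\cap N$, and the fixed-point argument all hold up (the verification that $t$ preserves the coset $gN$ setwise indeed uses normality of $N$, exactly as you note). Where you diverge from the paper is in the key step of surjectivity. The paper, given $fN\in C_{G/N}(tN)$, writes $ftf^{-1}=tk$ with $k\in N$ and then invokes Sylow theory in the subgroup $\langle t,N\rangle$: since $N$ has odd order, the Sylow $2$--subgroups of $\langle t,N\rangle$ have order two, so the involutions $t$ and $tk$ are conjugate, and the conjugating element can be taken in $N$; this produces $g\in N$ with $gtg^{-1}=ftf^{-1}$, hence $g^{-1}f\in C_G(t)$ representing the same coset. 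You instead observe that the involution $t$ acts by conjugation on the $t$--invariant coset $gN$, whose cardinality $|N|$ is odd, so an orbit of size one exists; this is the Glauberman-style coprime fixed-point argument. Both proofs hinge on the oddness of $|N|$ at exactly the same place, but yours is slightly more elementary (no Sylow theorem, just orbit counting for a group of order two), while the paper's conjugacy-of-involutions formulation packages the same fact in a form it reuses elsewhere (for example in Step 4 of the proof of Theorem 2, where involutions in the preimage of $E$ are shown to be conjugate). The discrepancy in naming (you cite the first isomorphism theorem, the paper the second) is immaterial: both yield $C_{G/N}(tN)\cong C_G(t)/(C_G(t)\cap N)$.
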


\begin{proof} Indeed we prove the equality  $\{cN|c\in  C_{G}(t)\}=C_{G/N}(tN)$ and  then the thesis follows from the Second Isomorphism Theorem.

The inclusion  $\{cN|c\in  C_{G}(t)\}\subseteq C_{G/N}(tN)$ is trivial.

We suppose that $fN$ is contained in  $C_{G/N}(tN)$ that is $ftf^{-1}N=tN$, so there exists $k\in N$ such that $ftf^{-1}=tk$. The subgroup $\langle t,N \rangle $ of $G$  generated by $t$ and $N$  has a Sylow $2$--subgroup of order two, so  all the involutions in $\langle t,N\rangle$ are conjugate, in particular  there exists an element $g\in N$ such that $gtg^{-1}=tk$. It follows that $g^{-1}f$ is contained in  $C_{G}(t)$ and, since $g\in N$, we have that   $f$ is contained in   $fN=Nf=N(g^{-1}f)=(g^{-1}f)N$. The coset $fN$ is contained in $\{cN|c\in  C_{G}(t)\}$ and the inclusion $\{cN|c\in  C_{G}(t)\}\supseteq C_{G/N}(tN)$ is proved.
\end{proof}

\section[Proof of \ref{thm1}]{Proof of \fullref{thm1}}\label{Section 3}

We denote by $\bar G$ the factor group $G/\O(G)$ and by $\tilde E$ the factor group of $E$ by its center $Z(E)$.

\medskip{\bf Step 1}\qua The maximal semisimple normal subgroup $E$ has sectional $2$--rank at most four and it has at most two components.  If $E$ has two components, $\tilde E$ is the direct product of two simple groups with sectional $2$--rank two.

By \fullref{thm0}, $E$ has sectional $2$--rank at most four and consequently $\tilde E$ has  sectional $2$--rank at most four.  
We recall that a minimal set of generators of a group means a set of generators such  that  any proper subset does not generate the group. In general we can have minimal sets of generators with different numbers of elements for the same finite group but,  by Burnside's basis theorem \cite[Theorem 1.16, p\,92]{S1}, any two  minimal sets of generators of a $p$--group contain the same number of elements.

Moreover, in   the direct product of two groups, the union of a minimal set of generators of the first group   with  a minimal set of generators of the second group is a minimal set of generators of the direct product. It follows that the sectional $2$--rank of the direct product of two groups is equal or  greater then the sum of the sectional $2$--ranks of the two direct factors.  Since simple groups have sectional $2$--rank at least two  \cite[p.144]{S2} we get the thesis. 

\medskip{\bf Step 2}\qua  We denote  by $C$ the centralizer $C_{\bar G}(E)$ of $E$ in $\bar G$.   If  $E$ is not trivial, then $C$ is solvable.

Since $C$ is the centralizer of a normal subgroup, $C$ is normal in $G$.
The intersection of $C$ and $E$ is $Z(E)$, the center of $E$. The center of $E$ has order a power of two, otherwise $\O(G)$ is not maximal.   We denote by  $D$  the group generated by $C$ and $E$;  the group  $D$ is a central product of $E$ and $C$. 
By \fullref{thm0}, the sectional $2$--rank of $D$ is equal or smaller  then four; it follows that $D/Z(E)$ has sectional $2$--rank  equal or smaller then four. The factor group $D/Z(E)$ is isomorphic to $E/Z(E)\times C/Z(E)$; the sectional $2$--rank of $E/Z(E)$ is at least two, so the sectional $2$--rank of $C/Z(E)$ is at most two (see Step 1).

The maximal semisimple normal subgroup of $C$ is trivial, otherwise $E$ is not maximal. We consider $F(C)$ the generalized Fitting subgroup of $C$. We recall that  the generalized Fitting subgroup is the subgroup  generated by the maximal semisimple normal subgroup   and by the Fitting subgroup; the Fitting subgroup is the maximal nilpotent normal subgroup \cite[p\,452]{S2}. In this case, since the maximal semisimple normal subgroup of $C$ is trivial, $F(C)$ coincides with the Fitting subgroup. 
Note that, since $F(C)$ is nilpotent, its Hall subgroup of maximal odd order is unique. Since $F(C)$ is characteristic in $C$, the generalized Fitting subgroup $F(C)$ is a $2$--group, otherwise $\O(G)$ is not maximal. The group $C$ acts on $F(C)$ by conjugation.  The centralizer $C_C(F(C))$ of $F(C)$ in $C$ is contained in $F(C)$ \cite[Theorem 6.11, p\,452]{S2} and in particular it is a $2$--group;  the factor group $C/C_C(F(C))$ is a subgroup of the  automorphism group of $F(C)$. Let $\Phi$ be the Frattini  subgroup of $F(C)$; the factor group $F(C)/\Phi$ is an elementary  abelian group.  The totality of automorphisms that leave every element of $F(C)/\Phi$ invariant is a normal $2$--subgroup of   ${\rm Aut}(F(C))$  \mbox{\cite[Theorem 1.17, p\,93]{S1}}. Let $T$ be the subgroup of $C$ of elements that act trivially on $F(C)/\Phi$; then $T$ is a normal $2$--subgroup  and $C/T$ is a subgroup of  ${\rm GL}(d,2)$, where $d$ is the rank of $F(C)/\Phi$.
Since $F$ has sectional $2$--rank at most four we have $d\leq 4$; if $d\leq 2$ the group  ${\rm GL}(d,2)$ is solvable and the proof is finished.

Suppose that $d=3$. The group ${\rm GL}(3,2)$ has order $2^3\cdot 3 \cdot 7$; any automorphism of order seven permutes cyclically all the involutions in ${\Bbb Z}_2\times {\Bbb Z}_2\times{\Bbb Z}_2 $. In this case $\Phi$ cannot contain $Z(E)$ because $F(C)/Z(E)$ must have sectional $2$--rank at most two. At least one involution in  $F(C)/\Phi$ is the projection of an element in $Z(E)$ and it  is contained in the center of $C$;  this involution is fixed by conjugation by each element of $C$ and $C/T$ cannot contain any element of order seven; $C/T$ has order at most 24 and it is solvable. 

Suppose finally that $d=4$. The group ${\rm GL}(4,2)$ has order $2^6\cdot 3^2 \cdot 5\cdot 7$;  an automorphism of order five does not centralize any involution of
 ${\Bbb Z}_2\times {\Bbb Z}_2\times{\Bbb Z}_2 \times{\Bbb Z}_2 $ (we have three orbits with five elements) and an automorphism of order
seven centralizes exactly one involution (two orbits with seven elements and one orbit with only one element).
We consider the group $Z(E)\cdot \Phi$ generated by $Z(E)$ and $\Phi$. The group $F(C)/(Z(E)\cdot \Phi)$ must have rank at most two. It follows that at least three involutions in $F(C)/\Phi$ are projections of elements in the center of $E$. The group $C/T$ cannot contain elements of order five or seven and hence the order of   $C/T$ is product of powers of 2 and 3.  By Burnside's Theorem \cite[Theorem 4.25, p\,216]{S2}, any such group is solvable. 

This finishes the proof of Step 2.

\medskip{\bf Step 3}\qua If    $E$ is not trivial, $\bar G/E$ is solvable.

The normal subgroup $D$ is the subgroup generated by $E$ and
$C=C_{\bar G}(E)$; we consider the factor group $\bar G/D$ that is
isomorphic to a subgroup of Out$(E)$, the outer automorphism group of
$E$.  If an automorphism of $E$ acts trivially on $\tilde E=E/Z(E)$,
it acts trivially on $E$; this is a consequence of the three subgroups
lemma \cite[(6.3), p\,447]{S2}, \cite[Lemma 3.8, p\,7]{GLS2} and of
the fact that $E$ is perfect. It follows that the group Out($E$) is a
subgroup of Out($\tilde E$).

We recall that the outer automorphism group  of a simple group    is solvable (for a discussion about this  property, called the Schreier property,  see  \cite[p\,4]{GLS2}).

The group $\tilde E$ is either a simple group with sectional $2$--rank at most four or the direct product of two simple groups  with sectional $2$--rank at most two; in this last  case  Out($\tilde E$) contains, with index at most two, the direct product of the outer automorphism groups of the two components \cite[Lemma 3.23, p\,13]{GLS2}.
In any case  Out($\tilde E$) is solvable; it follows that $\bar G/D$ and hence  $\bar G/E$ are solvable.

\medskip{\bf Step 4}\qua If $E$ is trivial,  there exists a normal subgroup $N$ of $G$ such that $N$ is solvable and $G/N$ is isomorphic to a subgroup of  ${\rm GL}(4,2)$.

We consider $F(\bar G)$ the generalized Fitting subgroup of $\bar G$; since $E$ is trivial,   $F(\bar G)$ coincides with the Fitting subgroup. 
The subgroup $F(\bar G)$ does not contain any element with odd order, otherwise $\O(G)$ is not maximal.
The generalized Fitting subgroup contains $C_{\bar G}(F(\bar G))$ its centralizer in $\bar G$ \cite[Theorem 6.11, p\,452]{S2} and in particular $C_{\bar G}(F(\bar G))$  is a $2$--group; the factor group of $\bar G$ by  $C_{\bar G}(F(\bar G))$ is isomorphic to a subgroup of  ${\rm Aut}(F(\bar G))$, the automorphism group of $F(\bar G)$.

  We consider $\Phi$, the Frattini subgroup of $F(\bar G)$. As a consequence, the factor group $F(\bar G)/\Phi$ is an elementary group. The totality of automorphisms that leave every element of $F(\bar G)/\Phi$ invariant is a normal $2$--subgroup of   ${\rm Aut}(F(\bar G))$  \cite[\mbox{Theorem 1.17}, p\,93]{S1}.
The factor group $\bar G$ contains $\bar N$,  a normal $2$--subgroup, such that $\bar G/\bar N$ is  isomorphic to a subgroup of    ${\rm GL}(d,2)$ where $d$ is the rank of $F(\bar G)/\Phi$.  
We denote by $N$ the preimage of $\bar N$ with respect to  the projection of $G$ onto  $\bar G$; we remark that $G/N$ is  isomorphic to a subgroup of  ${\rm GL}(d,2)$ and $N/\O(G)$ is a $2$--group.
If $G$ contains an involution with nonempty connected fixed point set, \fullref{thm0}  implies that  $G$ has sectional $2$--rank at most four and  hence we can set  $d=4$.

\section[Proof of \ref{thm2}]{Proof of \fullref{thm2}}\label{Section 4}

To simplify the notation  we denote  by $\bar G$ the factor group   $G/\O(G)$ and we denote by $\bar g$ the coset  $gO(G)$ where $g$ is an element of $G$; by hypothesis we have an involution $h$ in $G$ with connected and nonempty fixed point set such that $\bar h$ is contained in $E$, the maximal semisimple normal subgroup of $\bar G$.

In the proof we  often use the following fact: by \fullref{prop1} and \fullref{prop3},  if we have an involution $t$ in $G$ with nonempty fixed point set,   the centralizer $C_{\bar G}(\bar t)$ of $\bar t$ is isomorphic to subgroup of  a semidirect product ${\Bbb Z}_2 \ltimes ({\Bbb Z}_a\times {\Bbb Z}_b)$ where a generator of ${\Bbb Z}_2$  acts on the normal subgroup ${\Bbb Z}_a \times {\Bbb Z}_b$  by sending each element to its inverse. In particular  $C_{\bar G}(\bar h)$ is isomorphic to  $C_{G}(h)/(C_{G}(h)\cap \O(G))$ and we call $\bar h$--rotations  (resp.\ $\bar h$--reflections) the elements of  $C_{\bar G}(\bar h)$ that are projections of $h$--rotations  (resp.\ $h$--reflections); since  $\O(G)$ cannot contain $h$--reflections, this notation is not ambiguous.

In the proof we call  a group \textit{admissible} if it has a subgroup of index at most two that  is  isomorphic to a subgroup of  a semidirect product ${\Bbb Z}_2 \ltimes ({\Bbb Z}_a\times {\Bbb Z}_b)$, where a generator of ${\Bbb Z}_2$  acts on the normal subgroup ${\Bbb Z}_a \times {\Bbb Z}_b$  by sending each element to its inverse. We note that an admissible group is solvable and  a subgroup or a factor group of an admissible group is again admissible. We remark also that subgroups of $\bar G$, that contain the centralizer $C_{\bar G}(\bar h)$  with index at most two, are admissible.

\medskip
{\bf Step 1}\qua  The order of $Z(E)$, the  center of $E$,  is a power of two, the involution $\bar h$ is not contained in $Z(E)$ and  either $Z(E)$ is cyclic or   $Z(E)$ is elementary abelian of order four and $C_E (\bar h)$ is elementary abelian of order eight.

The order  of   $Z(E)$ is  a power of two, otherwise $\O(G)$ is not maximal.
Since   $C_E (\bar h)$ is solvable,  the  center  $Z(E)$ does not contain $\bar h$. Since $\bar h \in E$, the center  $Z(E)$ is a subgroup of   $C_E (\bar h)$, that  is isomorphic to a subgroup of  the semidirect product ${\Bbb Z}_2 \ltimes ({\Bbb Z}_{2^n}\times {\Bbb Z}_{2^m})$. 
If  $Z(E)$ contains an element with order strictly greater than two, then $Z(E)$ can contain only $\bar h$--rotations and, since $\bar h  \notin Z(E)$, the center $Z(E)$ can contain only one involution; this fact implies that $Z(E)$ has to be cyclic.

Suppose now that all the nontrivial elements in   $Z(E)$ have order two. If $Z(E)$ is not cyclic, it must contain an $\bar h$--reflection which thus must commute with the whole group and we have only one possibility:   $Z(E)\cong {\Bbb Z}_{2}\times {\Bbb Z}_{2}$ and  $C_E (\bar h)\cong {\Bbb Z}_{2}\times {\Bbb Z}_{2}\times {\Bbb Z}_{2}$.

\medskip{\bf Step 2}\qua We denote by $\tilde E$ the factor group $E/Z(E)$ and we denote by $\tilde h$ the coset $\bar h Z(E)$. We consider  $C_{\smash{\tilde E}} (\tilde h)$  the centralizer of $\tilde h$ in $\tilde E$. 
\begin{enumerate}
\item   If $Z(E)$ is not cyclic, then  $C_{\smash{\tilde E}} (\tilde h)$ has order at most eight.
\item  If $Z(E)$ is cyclic, then $C_{\smash{\tilde E}} (\tilde h)$ contains with index at most two the factor group  $C_E (\bar h)/Z(E)$.  
\end{enumerate}
In both  cases  $C_{\smash{\tilde E}} (\tilde h)$ is admissible.

We denote by $P$ the subgroup $\{\bar f\in E|\,\exists \, \bar g\in Z(E) \,\,\, {\rm such} \,\,\, {\rm  that } \,\,\, \bar f\bar h \bar f^{-1}=\bar h \bar g \}$
that is the preimage of $C_{\smash{\tilde E}} (\tilde h)$  with respect to  the standard projection of $E$ onto $\tilde E=E/Z(E)$; we recall  that $\smash{C_{\smash{\tilde E}} (\tilde h)=P/Z(E)}$.

If $Z(E)$ is the trivial group the thesis trivially holds. 

We suppose that  $Z(E)$ is cyclic and nontrivial; we denote by $\bar z$ the unique involution in  $Z(E)$ and we get the following equality:
 $$P=\{\bar f\in E|\,\,\, {\rm either} \,\,\, \bar f\bar h \bar f^{-1}=\bar h \,\,\, {\rm or} \,\,\, \bar f\bar h \bar f^{-1}=\bar h \bar z \}.$$
In this case we obtain that $C_{\smash{\tilde E}} (\tilde h)$ contains with index at most two the factor group $C_E (\bar h)/Z(E)$. 

Finally we suppose that  $Z(E)$ is not cyclic. By Step 1, we have that $Z(E)\cong {\Bbb Z}_{2}\times {\Bbb Z}_{2}$ and  $C_E (\bar h)\cong {\Bbb Z}_{2}\times {\Bbb Z}_{2}\times {\Bbb Z}_{2}$ for the center contains an $\bar h$--reflection. The centralizer $C_E(\bar h)$ is a normal subgroup of $P$; since $C_E(\bar h)$ contains its centralizer in $P$, the factor group $P/C_E(\bar h)$  acts effectively on $C_E(\bar h)$ by conjugation and $P/C_E(\bar h)$ is isomorphic to a subgroup  of   Aut  $({\Bbb Z}_{2}\times {\Bbb Z}_{2}\times {\Bbb Z}_{2})$, the automorphism group of the elementary abelian group of order eight.
Moreover $P/C_E(\bar h)$ leaves invariant elementwise $Z(E)\cong {\Bbb Z}_{2}\times {\Bbb Z}_{2}$ that is a subgroup of index two in  $C_E(\bar h)$; this fact implies that  $P/C_E(\bar h)$ is a subgroup of  ${\Bbb Z}_{2}\times {\Bbb Z}_{2}$. So  $C_{\smash{\tilde E}} (\tilde h)=P/Z(E).$ has order at most eight. This finishes the proof of Step 2.

\medskip{\bf Step 3}\qua If $E$ has one component, $\tilde E$ has only one conjugacy class of involutions.

In this case $\tilde E$ is a simple group with sectional $2$--rank at most four and we apply the Gorenstein--Harada classification of finite simple groups of sectional $2$--rank at most four (see Gorenstein \cite[p\,6]{G} and Suzuki \cite[Theorem 8.12, p\,513]{S2}).

The group $\tilde E$ contains $\tilde h$ and the centralizer of $\tilde h$ is admissible. We will show that no group in the Gorenstein--Harada list which has more than one conjugacy class of involutions contains an involution with an admissible centralizer.

The following list of  groups contains all the simple groups with sectional $2$--rank at most four and more than one conjugacy class of involutions (the algebraic properties of the simple groups can be found in the \emph{Atlas of finite groups} \cite{A}, Sakuma \cite[Chapter~6.5]{S2} or Gorenstein \cite{G}):
$$\begin{array}{l}M_{12};\ {\rm PSp}(4,q), \hbox{for } q \hbox{ odd};\ J_2;\
\Bbb A_n,  \hbox{for } 8\leq n\leq 11;\\ {\rm PSL} (4,q),\
{\rm PSU} (4,q),\ {\rm PSL} (5,q) \hbox{ and }{\rm PSU} (5,q), \hbox{for } q \hbox{ odd}.\end{array}$$
We can rule out directly the following groups  because the centralizer of any
involution in these groups is not solvable (for the groups of Lie type see Suzuki \cite[6.5.2, 6.5.7, 6.5.15]{S2}; for $J_2$  see Gorenstein \cite[p\,99]{G} or the \emph{Atlas of finite groups} \cite{A}):
$$\begin{array}{l}J_2;\ {\rm PSp}(4,q);\  {\rm PSL} (4,q) \hbox{ and }{\rm PSU} (4,q), \hbox{ for }q\hbox{ odd, }
q\geq 5;\\ {\rm PSL} (5,q)\hbox{ and  }{\rm PSU} (5,q),\hbox{ for }q\hbox{ odd.}\end{array}$$
 The Mathieu group $M_{12}$ and   the alternating groups $\Bbb A_n$, for $8\leq n\leq 11$, contain some involutions with solvable centralizer  but the centralizers of such involutions  contain $\Bbb S_4$, that is not admissible (\cite{A} for $M_{12}$).

Finally, in the groups ${\rm PSp}(4,3)$,  ${\rm PSL} (4,3)$ and  ${\rm PSU} (4,3)$,   the centralizer of each involution contains a subgroup with a factor group isomorphic to the non admissible group $\A_4 \cong {\rm PSL} (2,3) \cong {\rm PSU} (2,3)$  \cite[6.5.2, 6.5.7, 6.5.15]{S2}. 
This concludes the proof.

\medskip{\bf Step 4}\qua We denote by $\tilde S_2$ a Sylow $2$--subgroup  of  $\tilde E$; if $E$ has one component  either $\tilde S_2$ has sectional $2$--rank two or  $\tilde S_2$ is an elementary abelian group with eight elements.

Since by Step 3 the involutions in $\tilde E$ are all conjugate, we can suppose that $\tilde h$ is central in $\tilde S_2$, this implies that $\smash{\tilde S_2=C_{\smash{\tilde S_2}}(\tilde h)}$. We denote by ${\cal E}$ the preimage of $E$ in $G$ with respect to  the projection of $G$ onto $\bar G$.  We recall that we described  $Z(E)$ in Step 1; we consider three cases according to the structure of $Z(E)$. We remark also that  a $2$--group  with order at most eight which is not elementary abelian of rank three, has sectional $2$--rank  at most two, so when we will obtain  that  $\tilde S_2$ has  order at most eight, we will get the thesis. 

Suppose first that $Z(E)$ is trivial. In this case $\tilde S_2$ is isomorphic to the Sylow $2$--subgroup of ${\cal E}$. The involutions in  ${\cal E}$ are all conjugate. In fact if we consider $t$ and $t'$ two involutions in ${\cal E}$, we know that $\bar t$ and $\bar t'$ are conjugate in $E$, so there exists $g$ in $\O(G)$  such that $t$ is conjugate to $t'g$. Since $\O(G)$ has odd order, the group  generated by $t'$ and $\O(G)$ has Sylow $2$--subgroup of order two and all the involutions in the group are conjugate; in particular $t'$ and $t'g$ are conjugate. We can conclude that $t$ and $t'$ are conjugate.  
All the involutions in ${\cal E}$ are conjugate to $h$, so all the involutions have nonempty connected fixed point set; by \fullref{prop2}, the group  ${\cal E}$ cannot contain a subgroup isomorphic to  ${\Bbb Z}_2 \times {\Bbb Z}_2 \times {\Bbb Z}_2$. Since, by \fullref{prop1},  the  Sylow $2$--subgroup of  ${\cal E}$ is a subgroup of the semidirect product ${\Bbb Z}_2 \ltimes ({\Bbb Z}_{2^a}\times {\Bbb Z}_{2^b})$, we obtain that $\tilde S_2$  is dihedral or abelian of rank two. 

If $Z(E)$ is elementary abelian of order four, by Step 2  we have that $\tilde S_2=C_{\tilde S_2} (\tilde h)$ has order at most eight and we get thesis.

Finally we suppose that  $Z(E)$ is cyclic and nontrivial.
We consider $S_2$ the Sylow $2$--subgroup of $E$, the center  $Z(E)$ is contained in $S_2$ and $\tilde S_2$ is the projection of $S_2$. 
By Step 2 we  can assume  that $(C_{S_2}(\bar h)/Z(E))$ has index at most two in $\tilde S_2$.  

If $Z(E)$ contains an $\bar h$--reflection, by \fullref{prop1} the centralizer $C_{E}(\bar h)$ has order at most eight, and we conclude that  $(C_{S_2}(\bar h)/Z(E))$  has order at most four and  $\tilde S_2$ has order at most eight.

So we can suppose that $Z(E)$ contains only $\bar h$--rotations. We denote by  $R$  the subgroup  of $\bar h$--rotations contained in the Sylow $2$--subgroup of $E$; the subgroup $R$  contains $Z(E)$. 

We obtain that the factor group $R/Z(E)$ is cyclic. In fact, if  $R/Z(E)$ has rank two, we have an $\bar h$--rotation  $\bar f$ of order different than two such that $\bar f\notin Z(E)$ and $\bar f^2\in Z(E)$. The coset $\bar f Z(E)$ contains no involution and the coset  $\bar hZ(E)$ contains two involutions for $\bar h$ is not in the center; on the other hand since   $\bar f Z(E)$  and $\bar hZ(E)$ represent two involutions in $\tilde E$, they are conjugate and this gives a contradiction.

This concludes the proof in the case that $(C_{S_2}(\bar h)/Z(E))=\tilde S_2$.

On the other hand,  if $C_{S_2}(\bar h)$ does not contain any $\bar h$--reflection, $\tilde S_2$ contains a cyclic subgroup of index at most two, so  it has sectional $2$--rank at most two and the proof is finished. 

So we can suppose the following two facts:

\begin{enumerate}
\item   $C_{S_2}(\bar h)$ contains $\bar t$ an $\bar h$--reflection; 
\item   $(C_{S_2}(\bar h)/Z(E))$ has index two in $\tilde S_2=C_{\tilde S_2} (\tilde h)$; in this case there exist two nontrivial elements $\bar s$ in $S_2$ and $\bar c$ in $Z(E)$  such that $\smash{\bar s \bar h\bar s^{-1}=\bar h \bar c}$.
\end{enumerate}

Since $\bar t Z(E)$ is conjugate to $\bar hZ(E)$ and $\bar t Z(E)$ contains a number of involutions equal to the order of $Z(E)$ (these elements are all reflections), we obtain that $Z(E)$ has order two. Since $R/Z(E)$ is cyclic, we have  $R\cong  {\Bbb Z}_2 \times {\Bbb Z}_{2^m}$. Moreover we obtain that $m=1$; in fact, if  $R$ contains an element of order strictly greater than two,  one involution between  $\bar h$ and  $\bar h \bar c$ is characteristic in   $C_{S_2}(\bar h)$  (all the involutions which are obtained as powers of elements of order strictly greater than two coincide)  and this is in contradiction with the existence of $\bar s$. We can conclude that  $C_{S_2}(\bar h)$ is an elementary group of order eight  and $\tilde S_2$ has order eight.

\medskip{\bf Step 5}\qua If $E$ has one component, $E$ is isomorphic to ${\rm PSL}(2,q)$, with $q\geq 5$.

By Step 3  the simple group $\tilde E=E/Z(E)$ has only one conjugacy class of involutions and by Step 4   $\tilde E$  has sectional $2$--rank smaller than two or $\tilde S_2$ is an elementary abelian group of order eight.
In the  Gorenstein--Harada list  we find the following groups that satisfy these properties and that were not already excluded in Step 3:
$$\begin{array}{l}{\rm PSL} (2,q),\hbox{ for }q\hbox{ odd and }q\geq 5;\ {\rm PSL} (3,q) \hbox{ and  }{\rm PSU} (3,q) \hbox{ for }q\hbox{ odd;}\\ M_{11};
\Bbb A_7;\    J_1;\ {\rm PSL} (2,8);\ ^{2}G_2(3^n), \hbox{ for }n>1.
\end{array}$$
We recall that $\tilde E$ has to contain $\tilde h$ an involution with admissible centralizer. 

In the  groups  $J_1$ and   $^{2}G_2(3^n)$ for $n>1$   the centralizer of an involution is isomorphic to the non admissible group $ {\Bbb Z}_2 \times \Bbb{\rm PSL} (2,q)$ with $q>5$ \cite[p\,514]{S2}.  

We can rule out ${\rm PSL} (3,q)$ and  ${\rm PSU} (3,q)$  for $q$ odd,  $q\geq 5$,  because the centralizer of any
involution in these groups is not solvable \cite[6.5.2, 6.5.15]{S2}.

We consider the groups ${\rm PSL} (3,3)$, ${\rm PSU} (3,3)$ and
$M_{11}$. The centralizer of any involution in these groups has a
subgroup which has the alternating group $\A_4 \cong {\rm PSL} (2,3)
\cong {\rm PSU} (2,3)$ as a factor group and so it is not admissible
\cite[6.5.2, 6.5.15]{S2}, \cite{A}.

The group $PSL(2,8)$ does not admit central perfect extension \cite{A}; in this case $Z(E)$ should be trivial. 
The Sylow $2$--subgroup of  $PSL(2,8)$ is elementary abelian of order eight,  
by the same argument used in Step 4 for the case of  $Z(E)$ trivial we can exclude this group.  

We consider $\Bbb A_7$. If $Z(E)$ is not trivial, the unique central extension of  $\Bbb A_7$ with center of  order a power of two  is $\A^*_7$. The  Sylow $2$--subgroup of $\A^*_7$ is a quaternion group of order eight and it contains a unique involution that is central  in the group and this is impossible. 
We can suppose that $Z(E)$ is trivial and $E\cong \Bbb A_7$. We consider  the  centralizer of the involution $\bar h$ in $ \Bbb A_7$; we can suppose up to conjugation that  $\bar h$  is the permutation $(1,2)(3,4)$. The centralizer contains $(5,6,7)$,  $(1,3)(2,4)$ and $(1,2)(5,6)$; the involution  $(1,3)(2,4)$ commutes with the element of order three $(5,6,7)$, so  $(1,3)(2,4)$ is an $\bar h$--rotation. On the other hand $(1,3)(2,4)$ and  $(1,2)(5,6)$ do not commute and by \fullref{prop1}, this cannot occur.

Finally we consider the groups  ${\rm PSL}(2,q)$, with $q\geq 5$. The only central perfect extension of   ${\rm PSL}(2,q)$ with nontrivial center of order a power of two  is  ${\rm SL}(2,q)$, that contains a unique involution  that is central in the group and this is not possible. The only remaining possibility is that $Z(E)$ is trivial and $E$ is isomorphic to  ${\rm PSL}(2,q)$, with $q\geq 5$.

\medskip{\bf Step 6}\qua If $E$ has two components, $E$ is isomorphic to  $ {\rm SL}(2,q) \times_{\Z_2} {\rm SL}(2,q')$, with  $q$ and $q'$  odd prime powers greater than four.

 We consider $\tilde E=\tilde A \times \tilde B$ where $\tilde A$ and $\tilde B$ are two simple groups. By Step  3  $\tilde A$ and $\tilde B$ have sectional $2$--rank two. The simple groups with this property are:
$$\hbox{${\rm PSL} (2,q)$, for $q$ odd and $q\geq 5$; $M_{11}$;
$\Bbb A_7$; ${\rm PSL} (3,q)$ and  ${\rm PSU} (3,q)$, for $q$ odd.}$$ 
By Step 2 we recall that we have an involution $\tilde h$ in $\tilde E$ such that its centralizer is admissible.
We have that $\tilde h= (\tilde h_A, \tilde h_B)$ where $\tilde h_A\in \tilde A$ and $\tilde h_B\in \tilde B$. The centralizer of $C_{\smash{\tilde E}}(\tilde h)$ is the direct product of $\smash{C_{\tilde A}(\tilde h_A)}$ and  $\smash{C_{\tilde B}(\tilde h_B)}$. We remark that    $\tilde h_A$ and $\tilde h_B$ cannot be the identity of the group otherwise the  centralizer of $\tilde h$ is not solvable, so they are involutions.
The two centralizers $C_{\tilde A}(\tilde h_A)$ and  $ C_{\tilde B}(\tilde h_B)$ must be  admissible groups.
This condition excludes as components  $M_{11}$, ${\rm PSL} (3,q)$ and  ${\rm PSU} (3,q)$, with $q$ odd, because they do not contain any involution with admissible centralizer (see Step~5).

So we obtain that $\tilde A$ an $\tilde B$ are isomorphic to 
 ${\rm PSL} (2,q)$ or $\Bbb A_7$.
If $Z(E)$ is trivial, the centralizer of each involution in $E$ contains an elementary abelian group of order sixteen; moreover the group $\tilde E=E$  contains the involution $\bar h$ and the  centralizer of $\bar h$ cannot contain any elementary abelian group of order sixteen.
We can suppose that $Z(E)$ is not trivial, that is at least one between the components of $E$ is not simple. By Step 1, the center $Z(E)$ is a $2$--group. The  central perfect extensions of ${\rm PSL} (2,q)$ and $\A_7$ with center with order a power of two are ${\rm SL} (2,q)$ and $\A_7^*$ that contain a unique involution that is central in the groups. So $E$ cannot be  a direct product of its components otherwise the centralizer of each involution in $E$ contains a nonsolvable group.
We obtain  that $E=A\times_{{\Bbb Z}_2}B$ where $A,B\cong {\rm SL} (2,q)$ or $\A^*_7$. 

Finally we exclude $\smash{\A^*_7}$ as a possible component.
We consider  $\smash{\bar h=(\bar  h_A, \bar  h_B)}$, where $\smash{\bar h_A} \in A$ and $\smash{\bar h_B} \in  B$. The centralizer of $\bar h$ contains the centralizer of  $\bar h_A$ in $ A$ and the centralizer of  $\bar h_B$ in $ B$. If one between $\bar h_A$ and $\bar h_B$ is the identity or is an element of  order two, the centralizer of $\bar h$ is not solvable. To have an admissible centralizer for $\bar h$, we have to suppose that both $\bar h_A$ and $\bar h_B$ have order four (note that  $\bar h$ has order two). Any element of order four in $\A^*_7$ contains in its centralizer  noncommuting elements of order eight and three  which contradicts \fullref{prop1} and  \fullref{prop3} (see the \emph{Atlas of finite groups} \cite{A} for  the structure of  $ \A^*_7$).

\medskip{\bf Final step}\qua
 We denote by  $C=\smash{C_{\bar G}(E)}$  the centralizer of $E$ in $\bar G$.  Since $E$ is normal in $G$, the group $C$ is normal in $G$. Since $C$ is contained in the normalizer of $\bar h$ it  is isomorphic to a subgroup of  the semidirect product ${\Bbb Z}_2 \ltimes ({\Bbb Z}_{a}\times {\Bbb Z}_{b})$. The maximal subgroup of odd order in $C$ is unique and so is characteristic, thus it is normal in $G$. It follows that $C$ has to be a $2$--group otherwise $\O(G)$ is not maximal.
We denote by  $D$ the subgroup generated by $E$ and $C$; the subgroup $D$ is a central product $E\times_{Z(E)}C$.  The factor group of $\bar G$ by $D$ is a subgroup of ${\rm Out} (E)$, the outer automorphism group of $E$.

 Consider first the case $E\cong {\rm PSL}(2,q)$; in this case $D=E\times C$. In $E$ all the involutions are conjugate, so the centralizer in $\smash{\bar G}$ of each involution of $E$  is isomorphic to a subgroup of  the semidirect product ${\Bbb Z}_2 \ltimes ({\Bbb Z}_{a}\times {\Bbb Z}_{b})$.
The subgroup $E$  contains an elementary subgroup isomorphic to $ {\Bbb Z}_2 \times {\Bbb Z}_2$;  the subgroup $C$  centralizes  each involution in $E$. Since the  only possible abelian $2$--group with rank at least three, contained in  ${\Bbb Z}_2 \ltimes ({\Bbb Z}_{a}\times {\Bbb Z}_{b})$, is the elementary abelian subgroup of order eight, then either $C$ is trivial or $C\cong {\Bbb Z}_2$.  
The outer automorphism group of   ${\rm PSL}(2,q=p^n)$ is isomorphic to ${\Bbb Z}_2\times{\Bbb Z}_n$ \cite[p\,509]{S2}.

Suppose now that $E$ has two components; the factor group $D/Z(E)$ is isomorphic to $ E/Z(E)\times C/Z(E)$. Since $E/Z(E)$ has sectional $2$--rank four and $D/Z(E)$ has sectional $2$--rank at most four, it follows that $C/Z(E)$ has to be trivial and $E=D$. 
The set of the  components of $E$ is uniquely determined by the group and any automorphism of $E$ induces a permutation on the set of its  components \cite[Theorem 3.5, p\,7]{GLS2}; if   $E=A\times_{{\Bbb Z}_2}B$, then  the outer automorphism group of $E$ contains with index at most two a subgroup isomorphic to ${\rm Out}(A)\times  {\rm Out} (B)$ \cite[Lemma 3.23, p\,13]{GLS2}. The outer automorphism group of ${\rm SL}(2,q)$ is the same as that of  ${\rm PSL}(2,q)$ that  is isomorphic to ${\Bbb Z}_2\times{\Bbb Z}_n$.
This concludes the proof.

\section[Proof of \ref{thm3}]{Proof of \fullref{thm3}}\label{Section 5}

We denote by   $H$ (resp.\  $H'$)  the transformation group  of $K$ (resp.\ $K'$);  each nontrivial element of $H$  (resp.\  $H'$) fixes pointwise the same simple connected curve $\tilde K$ (resp.\ $\tilde K'$) in $M$ that is the preimage of $K$ (resp.\ $K'$) in $M$. Since $M$ is hyperbolic,  by Thurston's  orbifold geometrization theorem  \cite{BLPo}, we can suppose, up to conjugation,  that the  transformation groups are contained in $G$.

We note that $\tilde K$ and $\tilde K'$ do not coincide, even after conjugation. If   $n=m$ it  follows  from the fact that $K$ and $K'$ are inequivalent. If $n\neq m$ and $H'$ fixes pointwise $\tilde K$ we obtain some nontrivial symmetries of the knot $K$ which fix pointwise the knot and this is impossible by  the positive solution to the Smith Conjecture.

For each prime divisor $p$ of $n$ (resp.\ $m$) we denote by $H_p$ (resp.\ $H'_p$) the Sylow p-subgroup of $H$ (resp.\ $H'$). 
 
\medskip{\bf Step 1}\qua  Suppose that  a  subgroup  of $H$    with  order strictly greater than two normalizes a  subgroup of $H'$  with  order strictly greater than two,   then $H$ commutes elementwise with  $H'$; in particular   $K$ and $K'$ arise from the standard abelian construction. The same statement holds inverting the roles of $K$ and $K'$.

We denote by $B$ the subgroup of $H$  and by $B'$ the  subgroup of  $H'$. The subgroup  $B$ normalizes $B'$; since  $B$ has order  strictly greater than two, \fullref{prop1} implies that $B$ commutes elementwise with $B'$. The group $B'$ fixes setwise $\tilde K$; if $f\in B'$ we obtain that $fH f^{-1}$ fixes pointwise $\tilde K$. Since there exists at most one cyclic group of given order that fixes pointwise a connected curve,  we obtain that $B'$ normalizes $H$. By \fullref{prop1},  the groups  $B'$ and $H$  commute elementwise. Using the same argument  as before we obtain that  $H$ and $H'$   commute elementwise, this concludes the proof.  

\medskip{\bf Step 2}\qua Let $B$ be a subgroup of $G$ and let $p$ be an odd prime number such that $p$ divides the order of $B\cap H$ or the order of  $B\cap H'$. Then $S_p$, the Sylow $p$--subgroup of $B$,  is abelian of rank one or two; there are exactly one or two simple closed curves in $M$ that are fixed by some nontrivial element of $S_p$ with  connected fixed point set; the normalizer $N_G(S_p)$ of $S_p$ in $G$ is solvable.

\medskip{\bf Remark}\qua The statement of Step 2  may appear rather technical but it has the advantage that,  in this form,  it  applies directly throughout the remaining steps. 

\medskip Without loss of generality, we  suppose that $p$ divides the order of  $B\cap H$. Up to conjugation we can suppose that  $H_p\cap B$  is contained in $S_p$. 

We consider $N=N_{S_p}(H_p\cap B)$ the normalizer of $H_p\cap B$ in $S_p$. By \mbox{\fullref{prop1}} the group $N$ is abelian of rank at most two.
By Step 1 the group $N$ projects to a group of symmetries of $K$.  Since $M$ is hyperbolic, $K$ is a hyperbolic knot and in particular is not the unknotted circle. By the positive solution of the Smith conjecture,  $N/(H_p\cap B)$ is cyclic and there exists at most one connected simple closed curve fixed pointwise  by elements of $N/(H_p\cap B)$.
 An element of $N$, that is not contained in $H_p$ and  has  nonempty fixed point set,  projects to a nontrivial symmetry of $K$ with nonempty fixed point set; moreover  $H_p\cap B$ fixes setwise the fixed point set of any element of $N$. Thus in $N$ there exists at most one maximal cyclic subgroup different from $H_p\cap B$ with nonempty connected fixed point set.

If $f$ is  an element  of $S_p$ that normalizes  $N$, it acts by conjugation on the set of maximal cyclic subgroups with nonempty connected fixed point set. Since these groups are at most two and $p$ is odd, the action must be trivial and $f$ normalizes $ H_p\cap B$. We have that $N_{S_p}(N)=N$ and by  \cite[Theorem 1.6, p\,88]{S1} $S_p=N$. 

Finally we consider the normalizer $N_{G}(S_p)$ of $S_p$ in $G$. The group $N_{G}(S_p)$ acts by conjugation on the set of maximal cyclic subgroups of $S_p$ with nonempty connected fixed point set. Since these groups are at most two, the normalizer  $N_{G}(S_p)$ contains with index at most two   $N_{G}(H_p)$ that is solvable.
This concludes the proof. 

\medskip{\bf Step 3}\qua Let $B$ (resp.\ $B'$) be a subgroup of $H$ (resp.\ $H'$) such that the order of $B$ (resp.\ $B'$) is not a power of two. If $B$ and $B'$   generate a subgroup $B\cdot B'$ of $G$ that does not contain any involution with connected and nonempty fixed point set, then $K$ and $K'$  arise from the standard abelian construction. In particular if $G$ does not contain any involution with connected and nonempty fixed point set, then $K$ and $K'$  arise from the standard abelian construction.

Let $p$ be an odd  prime number that divides the order of $B$, the subgroup $H\cap B$ contains a nontrivial $p$--group. We denote by $S_p$  a   $p$--Sylow of $B\cdot B'$, $S_p$  is abelian of rank at most two.

If $p$ divides also the order of $B'$, we can suppose that a nontrivial subgroup of $B$ and a nontrivial subgroup of $B'$ are  contained in the same Sylow $p$--subgroup of $B\cdot B'$. By Step 1 this implies that $K$ and $K'$ arise from the standard abelian construction.

We can suppose that an odd prime number $q$, different from $p$, divides the order of $B'$.  By   Step 2, we deduce that $S_p$ contains exactly one or two maximal cyclic subgroups with nonempty connected fixed point set; up to conjugation we can suppose that one of these groups is $S_p\cap H$. 
We consider $N$ the normalizer of $S_p$ in   $B\cdot B'$. The group $N$ acts by conjugation on the set of the maximal cyclic subgroups with nonempty connected fixed point set; $N$ contains   with index at most two $N_0$, the normalizer of $S_p\cap H$ in $B\cdot B'$. We recall that $H$ fixes pointwise $\tilde K$ that is a simple closed curve.
 
We prove that $N_0$ is abelian. Suppose that $N_0$  contains $t$,  an involution with nonempty fixed point set that acts as a reflection on $\tilde K$.
Since $t$ fixes setwise $\tilde K$, it normalizes $H$ and projects to a strong inversion of the knot $K$. Any  strong inversion of $K$ has connected fixed point set; since $K$ is connected, also  $t$ has connected fixed point set.  We suppose that  $B\cdot B'$ does not contain any involution with nonempty connected fixed point set, so each element of  the group  $N_0$ acts as a  rotation on $\tilde K$ and it is abelian.

Now we prove that $N=N_0$. If $N\neq N_0$, there exists an element $f\in N$ such  that $f (S_p\cap H) f^{-1}\neq (S_p \cap H)$. The group $f (S_p\cap H)f^{-1}$ and  $S_p \cap H$ commute elementwise. The fixed point set of $f (S_p\cap H) f^{-1}$ is $f(\smash{\tilde K})$, a simple closed curve that is distinct from $\smash{\tilde K}$. We consider the group $\smash{fH  f^{-1}}$ that fixes pointwise the simple closed curve $f(\tilde K)$; since $f(\tilde K)$ is distinct from $\tilde K$ the groups $fH  f^{-1}$ and $H$ intersect trivially. Moreover by  Step 1, the groups $fH  f^{-1}$ and $H$ commute elementwise. A generator of   $fH  f^{-1}$ projects to a cyclic symmetry of  $K$ with  order $n$ and  with nonempty connected fixed point set. Finally we obtain also that the associated quotient link is symmetric; in fact $f$ normalizes the group generated by  $fH  f^{-1}$ and $H$ and it projects to the quotient link exchanging the two components. If $N\neq N_0$, the knot $K$ should be self-symmetric and this is excluded by hypothesis.

We have obtained that the normalizer of $S_p$ in $B\cdot B'$ is abelian, in particular $S_p$ is contained in the center of its normalizer. By \cite[Theorem 2.10, p\,143]{S2} $B\cdot B'$ splits as a semidirect product $U\rtimes  S_p$. We have supposed that there exists $q$ different from $p$ such that $q$ divides the order of $B'$. Any  Sylow $q$--subgroup is contained in $U$ and $S_p$ acts by conjugation on the set of Sylow $q$--subgroups. Since $p$ does not divide the order of $U$, it follows that some orbit has only one element. We obtain a Sylow $q$--subgroup $S_q$ that is normalized by $S_p$; up to conjugation we can suppose that the intersection of $S_q$ and $B'$ is not trivial. By Step 2  we obtain that $S_p\cap B$ normalizes $S_q\cap B'$; by Step 1 we obtain that $K$ and $K'$ arise from the standard abelian construction.  
 
\medskip{\bf Step 4}\qua    Let $B$ (resp.\ $B'$) be a subgroup of $H$ (resp.\ $H'$) such that the order of $B$ (resp.\ $B'$) is not a power of two. If $B$ and $B'$   generate a solvable subgroup $B\cdot B'$ of $G$, then $K$ and $K'$  arise from the standard abelian construction.

As in step 3 we can suppose that there exist two different odd primes $p$ and $q$, such that $p$ divides the order of $B$ and $q$ divides the order of $B'$.
By Sylow theorems for solvable groups, we obtain  that there exists $A$,  a subgroup of $B\cdot B'$ with order $p^{\alpha}q^{\beta}$, that contains a Sylow $p$--subgroup and a Sylow $q$--subgroup of   $B\cdot B'$. Up to conjugation we can suppose that the intersection of $A$ both with $B$ and with $B'$ is not trivial. The group $A$ does not contain any involution, so applying Step 3 to $A$ we obtain that $K$ and $K'$ arise from the standard abelian construction.    

\medskip{\bf Step 5}\qua If $E$ is not trivial, either $K$ and $K'$ arise from the standard abelian construction  or there exists in $G$ an involution $h$ with nonempty connected fixed point set such that $h\O(G)\in E$.

If $G$ does not contain any involution with nonempty connected fixed point set,   by Step~3 the knots $K$ and $K'$ arise from the standard abelian construction.  

We can suppose that $G$ contains one involution with nonempty connected fixed point set. We denote by ${\cal E}$ the preimage of $E$ in $G$ with respect to the projection of $G$ onto $O/\O(G)$; by \fullref{thm1} the factor group $G/{\cal E}$ is solvable. 

Suppose that the group $\cal E$ does not contain any involution with nonempty connected fixed point set. 
Let $p$ be an odd prime number that divides $n$ and let $q$ be an odd prime number that divides $m$. By the Sylow Theorem for solvable groups, there exists a subgroup ${\cal E}'$ of $G$ that contains ${\cal E}$, with the following properties:
\begin{itemize}
\item  ${\cal E}'$contains a Sylow $p$--subgroup and a Sylow $q$--subgroup of $G$; 

\item the factor group ${\cal E}'/{\cal E}$ has order $p^{\alpha}q^{\beta}$. 
\end{itemize}
All the involutions in  ${\cal E}'$ are contained in ${\cal E}$, so ${\cal E}'$ does not contain any involution with nonempty connected fixed point set. Moreover, up to conjugation, we can suppose that ${\cal E}'$ contains a covering transformation of order $p$ (resp.\ $q$) of $K$ (resp.\ $K'$).
By Step 3 the knots $K$ and $K'$ arise from the standard abelian construction.

\medskip{\bf Step 6}\qua If $E$ is trivial, either $K$ and $K'$ arise from the standard abelian construction  or there exists a normal subgroup $N$ of $G$ such that $N$ is solvable and $G/N$ is isomorphic to a subgroup of  ${\rm GL}(4,2)$. In this case if $K$ and $K'$ do not arise   from the standard abelian construction  all  prime divisors of  $n$ and  $m$ are  contained in $\{2,3,5,7\}.$

If $G$ does not contain any involution with connected fixed point set, by Step 3, the knots $K$ and $K'$ arise from the standard abelian construction.

If $G$ contains an involution with connected fixed point set, by \fullref{thm1} there exists a normal subgroup $N$ of $G$ such that $N$ is solvable and $G/N$ is isomorphic to a subgroup of  ${\rm GL}(4,2)$.

Finally we prove that, if the intersection group $H\cap N$ contains a nontrivial element of odd order, then $K$ and $K'$ arise from the standard abelian construction. Let $p$ be an odd  prime number that divides the order of    $H\cap N$, let  $q$ be an odd  prime number that divides $m$. We can suppose that $p$ is different from $q$, otherwise by Step 2 the knots $K$ and $K'$ arise from the standard abelian construction.

 By the Sylow Theorem applied to $G/N$, there exists a subgroup $ N'$ of $G$ that contains $ N$, with the following properties:
\begin{itemize}
\item $N'$ is solvable; 
\item $N'$  contains  a Sylow $q$--subgroup of $G$.
\end{itemize}
By Step 4 the knots $K$ and $K'$ arise from the standard abelian construction.

The same property holds if  $H'\cap N$ contains a nontrivial element of odd order.  

Since the order of ${\rm GL}(4,2)$ is $7 \cdot 5 \cdot 3^2 \cdot 2 ^6$, the Sylow $p$--subgroups of $G$ are contained in $N$ when $p\neq 2,\,3,\,5,\,7$. 
So if $K$ and $K'$ do not arise from the standard abelian construction, all  prime divisors of  $n$ and  $m$ are  contained in $\{2,3,5,7\}.$
 
\subsubsection*{Acknowledgements}
I am  deeply indebted to  Marco Reni. He   was the first that considered the possibility  to extend   the result obtained by him and Zimmermann \cite{RZ1} for simple groups to  the nonsolvable case but  he  sadly died in June 2000 and he could not continue his work. Bruno Zimmermann kindly communicated to me some of his ideas  and these suggestions are the starting point to write  this paper. The author is also grateful to the referee for many helpful comments that improved the article.

\bibliographystyle{gtart}
\bibliography{link}

\end{document}